\theoremstyle{plain}
\newtheorem{theo}{Theorem}[section]
\newtheorem{lem}{Lemma}[section]
\newtheorem{cor}{Corollary}[section]
\newtheorem{rem}{Remark}[section]
\newcommand{\be}{\begin{equation}}
\newcommand{\ee}{\end{equation}}
\newcommand{\bea}{\begin{eqnarray}}
\newcommand{\eea}{\end{eqnarray}}
\newcommand{\eeas}{\end{eqnarray*}}
\newcommand{\beas}{\begin{eqnarray*}}
\numberwithin{equation}{section}
\newcommand{\la}{\langle}
\newcommand{\ra}{\rangle}
\newcommand{\trace}{\mathop{\operator@font Trace}}
\newcommand{\vspan}{\mathop{\operator@font Span}}
\newcommand{\Int}{\mathop{\operator@font Int}}
\newcommand{\grad}{\mathop{\operator@font grad}}
\newcommand{\diver}{\mathop{\operator@font div}}
\newcommand{\id}{\mathop{\operator@font id}}
\newcommand{\Ad}{\mathop{\operator@font Ad}}
\begin{document}

\title{Generalized Sasakian space forms and Riemannian manifolds of quasi constant sectional curvature
}

\author{Avik De and Tee-How \textsc{Loo}\\
Faculty of Engineering and Science, Universiti Tunku Abdul Rahman\\
50744 Kuala Lumpur, Malaysia.\\ 
Institute of Mathematical Sciences, University of Malaya \\
50603 Kuala Lumpur, Malaysia.	\\ 
\ttfamily{de.math@gmail.com},
\ttfamily{looth@um.edu.my}}

\date{}
\maketitle

\abstract{In this paper, we show that a generalized Sasakian space form of dimension greater than three  is either of  constant sectional curvature; or  a canal hypersurface in Euclidean or Minkowski spaces; or locally a certain type of twisted product of  a real line  and a flat almost Hermitian manifold; or locally a wapred product of a real line and a generalized complex space form; 
or an $\alpha$-Sasakian space form; or it is of five dimension and admits an $\alpha$-Sasakian Einstein structure. 
In particular, a local classification for generalized Sasakian space forms of dimension greater than five is obtained.
A local classification of Riemannian manifolds of quasi constant sectional curvature of dimension greater than three is also given in this paper.}

\medskip\noindent
\emph{2010 Mathematics Subject Classification.}
Primary  53C25, 53C15; Secondary 53B20.

\medskip\noindent
\emph{Key words and phrases.}
Generalized Sasakian space forms.  Generalized complex space forms. Canal hypersurfaces. Riemannian manifolds of quasi constant sectional curvature.  Trans-Sasakian manifolds.  


\section{Introduction}
A \emph{generalized complex space form} is a  $RK$-manifold with pointwise constant holomorphic sectional curvature and of constant type.
It is known that an almost Hermitian manifold $P(F_1,F_2)$ with almost complex structure $J$ is a  
generalized complex space form if  
its Riemannian curvature tensor $R^P$ satisfies
\begin{align*}
R^P(X,Y)Z=
&F_1\{\la Y,Z\ra X-\la X,Z\ra Y\}  \\
&+F_2\{\la JY,Z\ra JX-\la JX,Z\ra JY-2\la JX,Y\ra JZ\}	
\end{align*}
where $F_1$ and $F_2$ are functions on $P$ (cf. \cite{tricerri-vanhecke, vanhecke}). 

An important characterization of such spaces of higher dimension was given in \cite{tricerri-vanhecke}.

\begin{theo}[\cite{tricerri-vanhecke}] \label{thm:gcsf}
Let $P(F_1,F_2)$ be a generalized complex space form with  $\dim_{\mathbb C}P\geq 3$. If $F_2$ is not identically zero, then 
$P$ is a non-flat complex space form (that is, an open part of a complex projective space or a complex hyperbolic space).
\end{theo}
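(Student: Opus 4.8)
\emph{Proof strategy.}
The plan is to show first that $P$ is Einstein, then to extract from the full second Bianchi identity that $P$ is Kähler on the open set $U:=\{p\in P:\ F_2(p)\neq 0\}$, and finally to propagate this conclusion to all of $P$. Write $2n:=\dim_{\mathbb R}P$, so $2n\geq 6$. Contracting the defining curvature identity (using $\trace J=0$ and $\sum_i\la Je_i,Z\ra\la JW,e_i\ra=-\la W,Z\ra$) shows that $P$ is Einstein, $\operatorname{Ric}^P=\rho\,\la\cdot,\cdot\ra$ with $\rho:=(2n-1)F_1+3F_2$; hence the contracted second Bianchi identity $\diver\operatorname{Ric}^P=\tfrac12\,d\tau$ forces $\rho$ to be constant.

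Next write $R^P=F_1A+F_2B$, where $A(X,Y)Z=\la Y,Z\ra X-\la X,Z\ra Y$ is the curvature model of constant sectional curvature $1$ and $B(X,Y)Z=\la JY,Z\ra JX-\la JX,Z\ra JY-2\la JX,Y\ra JZ$. Since $A$ is built from the parallel metric, $\nabla A=0$, so
\[
(\nabla_W R^P)(X,Y)Z=(WF_1)\,A(X,Y)Z+(WF_2)\,B(X,Y)Z+F_2\,(\nabla_W B)(X,Y)Z,
\]
where $(\nabla_W B)(X,Y)Z$ is obtained from the formula for $B$ by replacing, in turn, each occurrence of $J$ by $\nabla_W J$. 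Plugging this into the second Bianchi identity (cyclic sum over $W,X,Y$) produces one tensorial identity carrying $dF_1$, $dF_2$ and $\nabla J$. One then contracts it in two essentially different ways: the ordinary Riemannian trace merely reproduces the previous paragraph (the $\nabla J$-terms cancelling among themselves), whereas a $J$-trace — contracting two of its slots against $J$ rather than against $\la\cdot,\cdot\ra$ — produces a new relation in which $dF_2$ and a fixed trace of $\nabla J$ (essentially the codifferential of the fundamental $2$-form $\Omega=\la J\cdot,\cdot\ra$) occur with nonzero coefficients. Using the pointwise identities $(\nabla_WJ)\circ J+J\circ(\nabla_WJ)=0$ and $\la(\nabla_WJ)X,Y\ra=-\la X,(\nabla_WJ)Y\ra$, one further contraction, and the constancy of $\rho$, one can isolate a relation of the form $F_2\,\Phi=0$, where $\Phi$ is an algebraic expression in $\nabla J$ that vanishes if and only if $\nabla J=0$; consequently $\nabla J\equiv 0$ on $U$, i.e.\ $U$ is Kähler. \textbf{This contraction bookkeeping is the technical core of the argument, and it is exactly here that the hypothesis $\dim_{\mathbb C}P\geq 3$ is needed}: in lower dimension the coefficients that appear degenerate.

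On $U$ the Kähler condition gives $R^P(X,Y)JZ=J\,R^P(X,Y)Z$; substituting $R^P=F_1A+F_2B$ and simplifying with $J^2=-\id$ and the skew-symmetry of $J$ collapses this to
\[
(F_2-F_1)\big\{\la JY,Z\ra X-\la JX,Z\ra Y+\la Y,Z\ra JX-\la X,Z\ra JY\big\}=0 .
\]
The bracketed tensor is not identically zero (for orthonormal $X,Y$ with $Y\perp JX$, which exist because $\dim_{\mathbb C}P\geq 2$, the choice $Z=X$ gives $-JY$), so $F_1=F_2$ on $U$. Together with the constancy of $\rho=(2n-1)F_1+3F_2$ this yields $F_2\equiv\rho/(2n+2)$ on $U$, a nonzero constant (nonzero since $F_2\not\equiv0$). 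If $U\neq P$, connectedness of $P$ gives a point $p$ in the boundary of $U$: then $F_2(p)=0$ because $p\notin U$, yet $F_2(p)=\rho/(2n+2)\neq0$ by continuity because $p\in\overline{U}$ — a contradiction. Hence $U=P$, so $P$ is Kähler with $R^P=F_1(A+B)$ and $F_1$ a nonzero constant; that is, $P$ has constant holomorphic sectional curvature $4F_1\neq0$, and therefore is an open part of a complex projective or a complex hyperbolic space.

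The single genuine obstacle is the middle step — arranging the contractions of the second Bianchi identity so that the $dF_i$-terms drop out and one is left with a relation forcing $\nabla J$ to vanish wherever $F_2\neq 0$. Once that is in hand, the Einstein/Schur reduction, the Kähler-identity computation, and the globalization are all routine.
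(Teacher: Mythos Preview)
The paper does not prove this theorem; it is quoted from Tricerri--Vanhecke as background, so there is no in-paper proof to compare against. I can, however, assess your proposal on its merits.

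Your first paragraph (Einstein plus Schur giving $\rho=(2n-1)F_1+3F_2$ constant) and your last two paragraphs (on a K\"ahler open set the identity $R^P(X,Y)JZ=JR^P(X,Y)Z$ forces $F_1=F_2$, hence both are the nonzero constant $\rho/(2n+2)$, and a boundary argument globalizes) are correct and clean.

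The gap is exactly where you flag it. You assert that a suitable $J$-contraction of the second Bianchi identity, combined with the constancy of $\rho$, yields a relation $F_2\,\Phi=0$ where $\Phi$ is an algebraic expression in $\nabla J$ that vanishes \emph{iff} $\nabla J=0$. This is not substantiated, and in fact no single scalar contraction can detect the full tensor $\nabla J$: contractions of the Bianchi identity against $\la\cdot,\cdot\ra$ and against $J$ produce relations among $dF_1$, $dF_2$ and \emph{traces} of $\nabla J$ (essentially $\delta\Omega$ and the Lee form), not $\nabla J$ itself. What those contractions actually buy you, once $\dim_{\mathbb C}P\ge 3$ is used, is a second linear relation between $dF_1$ and $dF_2$ independent of $d\rho=0$, whence $F_1$ and $F_2$ are \emph{separately} constant. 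That is the genuine role of the dimension hypothesis. But with $F_1,F_2$ constant the second Bianchi identity becomes $F_2\,\mathfrak S_{W,X,Y}(\nabla_W B)(X,Y)Z=0$, and this cyclic condition on $\nabla J$ does \emph{not} by itself force $\nabla J=0$; Tricerri--Vanhecke require a further structural step (through their decomposition of almost-Hermitian curvature tensors and the analysis of $RK$-manifolds of constant type) to conclude that the structure is K\"ahler. Your outline skips precisely this passage.

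So the overall architecture is sound, but the load-bearing middle step is asserted rather than proved, and the specific assertion --- that one contraction simultaneously eliminates the $dF_i$ terms and forces all of $\nabla J$ to vanish --- is stronger than what the Bianchi identity can deliver.
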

We remark that the above theorem is not true in general.
Indeed, a generalized complex space form $P$ with  $\dim_{\mathbb C}P=2$ and with nonconstant $F_2$ can always be constructed via certain conformal deformations on a Bochner flat Kaehler manifold with nonconstant scalar curvature  (cf. \cite{olszak}).

The almost contact counterpart, so-called generalized Sasakian space forms was introduced in \cite{abc}.
A \emph{generalized Sasakian space form} $M(f_1,f_2,f_3)$ is an almost contact metric manifold with almost contact metric structure 
$(\phi, \xi,\eta,\la,\ra)$ whereby its curvature tensor satisfies 
\begin{align}\label{eqn:gs}
R=&f_1R_1+f_2R_2+f_3R_3	
\end{align}
for some functions $f_1$, $f_2$ and $f_3$ on $M$, where $R_1$, $R_2$ and $R_3$ are curvature-like tensors given by 
\begin{align*}
R_1(X,Y,Z)=&\la Y,Z\ra X-\la X,Z\ra Y  \nonumber\\
R_2(X,Y,Z)=&\la\phi Y,Z\ra\phi X-\la\phi X,Z\ra\phi Y-2\la\phi X,Y\ra\phi  Z	\nonumber \\
R_3(X,Y,Z)=&-\eta(Y)\eta(Z)X-\la Y,Z\ra\eta(X)\xi+\eta(X)\eta(Z)Y+\la X,Z\ra\eta(Y)\xi.	
\end{align*}
An algebraic characterization was obtained in \cite{falcitelli}: a generalized Sasakian space form $M$  is an almost contact $N(k)$-manifold with pointwise constant $\phi$-sectional curvature and 
\[
\la R(X,Y)X,Y\ra-\la R(X,Y)\phi X,\phi Y\ra=l\{||X||^2||Y||^2-\la X,Y\ra^2-\la X,\phi Y\ra^2\}
\]
for any $X$ and $Y$ orthogonal to $\xi$, where $l$ is a function on $M$.

Typical examples of generalized Sasakian space forms are Sasakian, cosymplectic or Kenmotsu space forms.
More precisely,  for a constant $c$,  a generalized Sasakian space form $M$ becomes
\begin{itemize}
	\item[(a)] a Sasakian space form when $M$ is Sasakian with $f_1 = (c+4)/4$,  and $f_2 = f_3 =(c-1)/4$; and 
	\item[(b)]  a cosymplectic space form when $M$ is cosymplectic with $f_1=f_2=f_3=c/4$; and  
	\item[(c)]  a Kenmotsu space forms when $M$ is Kenmotsu with $f_1=(c-3)/4$ and $f_2 = f_3 =(c+1)/4$. 
\end{itemize}

Apart from the spaces mentioned above, the class of generalized Sasakian space forms does include some other spaces, for  instance,
the five-dimensional sphere $S^5$ with a nearly Sasakian manifold structure and 
the warped products of $\mathbb R$ and a generalized complex space form $P$. 

Generalized Sasakian space forms have been studied in a number of papers from several points of view
(for instance, \cite{abc}--\cite{alegre-carriazo},  \cite{falcitelli},  \cite{ghosh}, \cite{kim},  etc).
To certain extent, we may say that these papers intended to provide new examples of generalized Sasakian space forms;  
to characterize subclasses of generalized Sasakian space forms with specific geometric properties
and to identify automorphisms preserving the generalized Sasakian space form structure.
Nevertheless, the complete classification of generalized Sasakian space forms has yet to be obtained up to this point.
In this paper, we shall provide a complete list of spaces of  dimension greater than three that possibly carry a generalized Sasakian spaces form structure. 

The idea of getting the list was comparatively simple though a large number of lengthly calculations needed in the proof.
We shall first derive several important data out of the second Bianchi's identity, and then considered cases, which are divided  according to  the characteristics of the functions $f_1, f_2$ and $f_3$. 

Among the cases we considered,  generalized Sasakian space forms with $f_2=0$ seem to have their unique characteristics. 
Firstly, these spaces are  locally conformally flat (cf. \cite{kim}), secondly,  they are also of quasi constant sectional curvature.  
It is inevitable to include a study on Riemannian manifolds of quasi constant sectional curvatures  in our paper.

A Riemannian manifold $M$ is said to have \emph{quasi constant sectional curvature} if it is equipped with a unit vector field $\xi$ and a $1$-form 
$\eta$ given by $\eta(\cdot) =\la\xi,\cdot\ra$ such that its Riemannian curvature tensor $R$ satisfies
(cf. \cite{chen-yano})
\[
R=f_1R_1+f_3R_3
\]
for some functions $f_1$ and $f_3$ on $M$.

Geometrically, a Riemannian manifold $M$ equipped with a unit vector field $\xi$ is of quasi constant sectional curvature 
if for any plane $E$ in $T_pM$, $p\in M$, the sectional curvature of $E$ depends only on the point $p$ and the angle between $E$ and $\xi$.

The study of Riemannian manifolds of quasi constant sectional curvature was closely related to locally conformally flat hypersurfaces in Euclidean spaces. 
Indeed, in \cite{ganchev,ganchev2}, Ganchev and Mihova showed that a Riemannian manifold of quasi constant sectional curvature 
with $f_1f_3\neq0$ is an open part of a (space-like) canal hypersurface in Euclidean spaces or Minkowski spaces (cf. Theorem~\ref{thm:canal}).
By including the case $f_1=0$ and $f_3\neq0$, we obtain
\begin{theo}\label{thm:quasi}
A Riemannian manifold $M$ of dimension $m\geq 4$  is of quasi-constant sectional curvature 
if and only if  one of the following holds:
\begin{enumerate}
\item[(a)]  $M$ is of constant sectional curvature;
\item[(b)]  $M$ is an open part of a canal hypersurface in $\mathbb R^{m+1}$;  
\item[(c)]   $M$ is an open part of a space-like canal hypersurface in $\mathbb R^{m+1}_1$ of elliptic type;  
\item[(d)]   $M$ is an open part of a space-like canal hypersurface in $\mathbb R^{m+1}_1$ of hyperbolic type;  
\item[(e)]   $M$ is an open part of a space-like canal hypersurface in $\mathbb R^{m+1}_1$ of parabolic type;
\item[(f)]   $M$ is locally a twisted product space  ${_a}\mathbb R\times\mathbb R^m$ with 
		\[
	a(t,x)=\sum^m_{i=1}(C(t)x_i^2+D_i(t)x_i)+E(t); \quad f_3=\frac{2C}a
	\]
	for some functions  $C\neq0$, $D_1,\cdots, D_m$ and  $E$ on $\mathbb R$  such that $a>0$. 
	\end{enumerate}
\end{theo}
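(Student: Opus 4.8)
The plan is to read structural data off the second Bianchi identity, split into three cases according to the vanishing of $f_3$ and of $f_1$, dispose of the two classical cases by Schur's lemma and by the Ganchev--Mihova theorem, and settle the genuinely new case $f_1\equiv0\neq f_3$ by recognizing $M$ as a twisted product and solving the resulting PDE for the twisting function. \emph{Structure equations.} On the open set $\mathcal U=\{f_3\neq0\}$ I would differentiate $R=f_1R_1+f_3R_3$, substitute into $\sum_{\mathrm{cyclic}}(\nabla_UR)(X,Y)Z=0$ using that $R_1$ is parallel while $\nabla R_3$ is expressed through $\nabla\eta$ and $\nabla\xi$, and take suitable contractions. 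This should give: (i) $\xi$ is a unit torse-forming field, $\nabla_X\xi=\rho(X-\eta(X)\xi)$ for a function $\rho$, so that $\xi^\perp$ is integrable with totally umbilical leaves whose second fundamental form is $-\rho\la\cdot,\cdot\ra\xi$; (ii) $df_1=(\xi f_1)\eta$; and (iii) first-order relations tying together $\xi f_1$, $\xi f_3$, $\xi\rho$, $\rho$, $f_1$ and $f_3$. The computation is routine; its role is to replace the curvature hypothesis by an umbilical codimension-one foliation plus a transverse ODE system.

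\emph{Case split.} If $f_3\equiv0$ on an open set, then $R=f_1R_1$ there, and the second Bianchi identity (Schur, using $m\geq3$) gives $df_1=0$, so that piece has constant sectional curvature, which is (a). If $f_1f_3\neq0$ on an open set, the Ganchev--Mihova classification (Theorem~\ref{thm:canal}) applies verbatim and yields an open part of a canal hypersurface in $\mathbb R^{m+1}$ or of a space-like canal hypersurface of elliptic, hyperbolic or parabolic type in $\mathbb R^{m+1}_1$, which are (b)--(e). The remaining possibility, $f_1\equiv0$ and $f_3\not\equiv0$ on an open set, is the new one.

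\emph{The new case.} Assume $f_1\equiv0\neq f_3$. The crucial claim is that $\rho\equiv0$. From the Codazzi equation of the umbilical foliation together with $R=f_3R_3$ one first obtains that $\rho$ is constant along each leaf (this is where $m\geq4$ enters: the leaves have dimension $\geq3$), and feeding this into the relations of the first step should force $\rho\equiv0$. I expect this to be the main obstacle, since a Gauss-equation cancellation makes umbilical foliations with spherical leaves a priori compatible with $f_1=0$, so only the finer Bianchi relations can exclude them. Once $\rho\equiv0$, the leaves of $\xi^\perp$ are totally geodesic, and since $R_3(X,Y,Z)=0$ for $X,Y,Z\in\xi^\perp$ the Gauss equation makes each leaf flat; hence $M$ carries a totally geodesic flat codimension-one foliation together with the (automatically totally umbilical) line field spanned by $\xi$, so by the local characterization of twisted products in terms of a pair of orthogonal foliations (one totally geodesic, one totally umbilical) $M$ is locally ${_a}\mathbb R\times\mathbb R^m$ with metric $a^2\,dt^2+\sum_idx_i^2$, $\xi=a^{-1}\partial_t$, $a=a(t,x)>0$. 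Imposing $R=f_3R_3$ on this model, which by a direct computation amounts to comparing $R(\xi,\partial_i)\partial_j=-a^{-1}(\partial_i\partial_j a)\xi$ with $f_3R_3(\xi,\partial_i,\partial_j)=-f_3\delta_{ij}\xi$, reduces to $\partial_i\partial_j a=f_3\,a\,\delta_{ij}$. The off-diagonal equations give $a=\sum_i\phi_i(x_i,t)+\psi(t)$, and equating the diagonal ones forces $\partial_i^2\phi_i$ to depend on $t$ only, say $\partial_i^2\phi_i=2C(t)$; integrating yields $a(t,x)=\sum_{i=1}^m(C(t)x_i^2+D_i(t)x_i)+E(t)$ with $f_3=2C/a$ and $C\neq0$ (else $f_3\equiv0$) — precisely (f).

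\emph{Converse and patching.} Item (a) is immediate (take $f_3=0$ and any unit $\xi$); (b)--(e) are the converse half of the Ganchev--Mihova theorem; and for (f) a direct computation with $a^2\,dt^2+\sum_idx_i^2$ and $\xi=a^{-1}\partial_t$ shows $\nabla_{\partial_i}\xi=0$, the slices $\{t=\mathrm{const}\}$ are totally geodesic and flat, and $R=f_3R_3$ holds with $f_1=0$ and $f_3=2C/a$, so (f) is of quasi-constant sectional curvature. A connectedness argument then patches together the open sets on which the three cases occur, yielding the stated local classification.
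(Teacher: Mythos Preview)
Your overall strategy---split by the vanishing of $f_3$ and $f_1$, invoke Schur and Ganchev--Mihova for the classical cases, and in the remaining case recognize $M$ as a twisted product over a flat leaf and solve the PDE for $a$---is exactly the paper's. Two points, however, need repair.

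First, your structure equation (i) is not what the Bianchi identity actually gives. The correct output (this is the paper's Lemma~\ref{lem:f_3_neq_0-b}) is
\[
\nabla\xi=\rho\bigl(\mathbb I_{TM}-\xi\otimes\eta\bigr)+V\otimes\eta,\qquad V:=\nabla_\xi\xi,
\]
with $V$ generically nonzero. In fact in case~(f) itself one computes $V=-a^{-1}\sum_i(\partial_i a)\,\partial_i\neq0$, so $\xi$ is \emph{not} torse-forming there; your formula $\nabla_X\xi=\rho(X-\eta(X)\xi)$ would force $\nabla_\xi\xi=0$ and is inconsistent with the very spaces you are classifying. This does not actually derail the argument, because the conclusion you need---that the leaves of $D=\xi^\perp$ are totally geodesic---only requires $\rho=0$, not $V=0$; but (i) as written is false.

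Second, what you flag as ``the main obstacle'' is a one-line consequence of the relations you already listed in (iii). Among those relations the Bianchi identity produces precisely
\[
df_1=-2\rho f_3\,\eta
\]
(Lemma~\ref{lem:f_3_neq_0-b}(c)). With $f_1\equiv0$ and $f_3\neq0$ this gives $\rho\equiv0$ immediately; no Codazzi detour or delicate cancellation is needed, and this is also where $m\geq4$ is genuinely used (to derive the lemma). Once $\rho=0$ you have $\nabla_X\xi=0$ for $X\perp\xi$ (while $\nabla_\xi\xi=V$ may survive), $D$ is autoparallel with flat leaves, and the twisted-product splitting plus the PDE $\partial_i\partial_j a=f_3\,a\,\delta_{ij}$ follow exactly as you outline and as the paper records in Theorem~\ref{thm:TP}.
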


The above theorem provides all the possibilities for generalized Sasakian space forms in the case $f_2=0$.
Next, we shall show that generalized Sasakian space forms with $f_2\neq0$ must be either locally a warped product of $\mathbb R$ and generalized complex space forms; or $\alpha$-Sasakian; or carry an $\alpha$ Sasakian Einstein structure. The precise statement is given as follows.
\begin{theo} \label{thm:gssf} 
Let $M^{2n+1}(f_1,f_2,f_3)$ be a generalized Sasakian space form, $n\geq 2$. Then  one of the following holds:
\begin{enumerate}
\item[(a)] $M$ is of constant sectional curvatures; or
\item[(b)]  $M$ is an open part of a canal hypersurface in $\mathbb R^{8}$, $n=3$;  or 
\item[(c)]  $M$ is an open part of a space-like canal hypersurface in $\mathbb R^{8}_1$ of elliptic type, $n=3$;  or
\item[(d)]  $M$ is an open part of a space-like canal hypersurface in $\mathbb R^{2n+2}_1$ of hyperbolic type;  or 
\item[(e)]  $M$ is an open part of a space-like canal hypersurface in $\mathbb R^{2n+2}_1$ of parabolic type; or
\item[(f)]
$M$ is locally a twisted product  $_a\mathbb R\times P$, where $P$ is a flat almost Hermitian manifold and 
		\[
	a(t,x)=\sum^{2n}_{i=1}(C(t)x_i^2+D_i(t)x_i)+E(t); \quad f_3=\frac{2C}a
	\]
	for some functions  $C\neq0$, $D_1,\cdots, D_{2n}$ and  $E$ on $\mathbb R$  such that $a>0$. 
\item[(g)]
$M$ is locally a warped product $\mathbb R\times_{b}P$ with $\ddot b=-(f_1-f_3)b $ and $\beta=\dot b/b$, where  $P(F_1,F_2)$ is a generalized complex space form with functions
$F_1=b^2f_1+\dot b^2$ and $F_2=b^2f_2$. 
In particular, for $n\geq3$, $M$ is a $\beta$-Kenmotsu manifold of pointwise constant $\phi$-sectional curvatures; or 

\item[(h)]  
$M$ is an $\alpha$-Sasakian space form  with $f_1-f_3=\alpha^2$; or
\item[(i)]
$M$ admits an $\alpha$-Sasakian Einstein structure $(\psi,\xi,\eta,\la,\ra)$ with $\psi\phi=\phi\psi$,  $\psi^2=\phi^2$ and $n=2$, where $\nabla\xi=-\alpha\psi$.
\end{enumerate}
\end{theo}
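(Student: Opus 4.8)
The plan is to extract the structure equations of a generalized Sasakian space form from the second Bianchi identity and then run a case analysis governed by the vanishing/non-vanishing of $f_2$ and of $f_1-f_3$. First I would compute the Ricci tensor and scalar curvature from \eqref{eqn:gs}, obtaining $\mathrm{Ric}=\{2nf_1+3f_2-f_3\}\la\cdot,\cdot\ra-\{3f_2+(2n-1)f_3\}\eta\otimes\eta$, so that $\xi$ is an eigenvector of the Ricci operator with eigenvalue $2n(f_1-f_3)$. Feeding $R=f_1R_1+f_2R_2+f_3R_3$ into the differential Bianchi identity $(\nabla_X R)(Y,Z)+(\nabla_Y R)(Z,X)+(\nabla_Z R)(X,Y)=0$ and contracting appropriately should yield PDEs relating $df_1,df_2,df_3$ to the covariant derivatives $\nabla\phi$, $\nabla\xi$, $\nabla\eta$; in particular one expects that $\xi$ is Killing or at worst that $\nabla\xi$ is a prescribed combination of $\phi$ and $\id-\eta\otimes\xi$, forcing the almost contact structure to be \emph{trans-Sasakian} of type $(\alpha,\beta)$ with $\alpha,\beta$ linked to $f_1-f_3$. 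This is the technical heart and I expect the ``derive several important data'' step to be the main obstacle: one must carefully disentangle the $\phi$-part and the $\eta$-part of the Bianchi contractions to show that $f_2$, $\nabla\phi$ and the trans-Sasakian functions are not independent.

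Once the trans-Sasakian data are in hand, I would split into the announced cases. If $f_2\equiv0$ on an open set, then $R=f_1R_1+f_3R_3$, i.e.\ $M$ has quasi constant sectional curvature, and Theorem~\ref{thm:quasi} applies verbatim, yielding alternatives (a)--(f) (with the dimensional restriction $n=3$ in (b)--(c) coming from the fact that a $2n$-dimensional elliptic canal hypersurface must carry a compatible almost complex structure, which pins $2n$ via the classification of spheres admitting almost complex structures / the structure of the canal fibres). If $f_2\neq0$ somewhere, then by continuity $f_2\neq0$ on an open dense set; here I would use the Bianchi-derived equations to show $\beta$-Kenmotsu behaviour ($\nabla\xi=\beta(\id-\eta\otimes\xi)$, $\alpha=0$) or $\alpha$-Sasakian behaviour ($\nabla\xi=-\alpha\phi$, $\beta=0$) according to whether $\beta$ or $\alpha$ vanishes, dispatching the $\beta$-Kenmotsu stratum to case (g) and the $\alpha$-Sasakian stratum to (h).

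For case (g), once $M$ is $\beta$-Kenmotsu of pointwise constant $\phi$-sectional curvature, the standard de Rham–type argument along the integral curves of $\xi$ realizes $M$ locally as a warped product $\mathbb R\times_b P$ with $\beta=\dot b/b$; substituting the warped-product curvature formulas into \eqref{eqn:gs} identifies $P$ as an $RK$-manifold of pointwise constant holomorphic sectional curvature and constant type, i.e.\ a generalized complex space form with $F_1=b^2f_1+\dot b^2$, $F_2=b^2f_2$, and forces $\ddot b=-(f_1-f_3)b$; for $n\geq3$ Theorem~\ref{thm:gcsf} makes $P$ a complex space form, whence the stated refinement. The twisted-product alternative (f) arises as the borderline $f_2=0$ situation that is not of constant curvature nor a canal hypersurface, obtained by writing the locally conformally flat metric (cf.\ \cite{kim}) as $_a\mathbb R\times\mathbb R^{2n}$ and solving the ODE/PDE that the quasi-constant-curvature condition imposes on the twisting function $a$, which integrates to the displayed quadratic-in-$x_i$ form with $f_3=2C/a$; the flatness of the almost Hermitian factor $P$ follows since the $\mathbb R^{2n}$ fibre is flat and carries the induced $\phi$.

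Finally, the $\alpha$-Sasakian stratum: if $\alpha$ is (locally) constant one checks that $f_1-f_3=\alpha^2$ and that the remaining curvature identities say exactly that $M$ is an $\alpha$-Sasakian space form, giving (h). The delicate residual possibility is that $\alpha$ is non-constant; here the Bianchi identities together with $d(f_1-f_3)=0$ (another consequence extracted in the first step) should force, in low dimension, a \emph{second} compatible structure $\psi$ with $\psi\phi=\phi\psi$, $\psi^2=\phi^2$ and $\nabla\xi=-\alpha\psi$, and the integrability constraints collapse to an Einstein condition, which by dimension counting can only survive when $2n+1=5$, i.e.\ $n=2$, producing (i). I expect this very last bifurcation — ruling out non-constant $\alpha$ except in dimension five, and producing the auxiliary Einstein $\psi$-structure there — to require the most care, as it is where the ``five-dimensional exotic example'' ($S^5$ nearly Sasakian) must be seen to be genuinely unavoidable; the dimension-counting obstruction (an Einstein $\alpha$-Sasakian manifold with non-parallel structure forces a curvature identity that is over-determined once $2n+1>5$) is the mechanism I would use to close it.
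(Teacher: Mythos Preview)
Your overall architecture (Bianchi identities $\Rightarrow$ structure data on $\nabla\xi$; case split on $f_2$; quasi-constant curvature classification for $f_2=0$; warped/$\alpha$-Sasakian dichotomy for $f_2\neq0$) matches the paper, and the treatment of cases (a)--(g) is essentially correct. The genuine gap is in the $\alpha$-Sasakian branch and the mechanism producing (h) versus (i).

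The Bianchi computations for $f_2\neq0$ only give that the \emph{symmetric} part of $\nabla\xi$ is $\beta(\mathbb I_{TM}-\xi\otimes\eta)$; they do \emph{not} force $\nabla\xi=-\alpha\phi$. What the paper does (via its $N(k)$-manifold theorem) is define the skew part $T=\nabla\xi-\beta(\mathbb I_{TM}-\xi\otimes\eta)$ and show that either $T=0$ (your case (g)), or $\psi:=-\alpha^{-1}T$ is an $\alpha$-Sasakian structure with $\alpha$ automatically a nonzero \emph{constant} and $\beta=0$. Thus in this branch $\alpha$ is \emph{never} non-constant; your proposed bifurcation ``$\alpha$ constant $\Rightarrow$ (h), $\alpha$ non-constant $\Rightarrow$ (i)'' does not exist. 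The actual bifurcation is $\psi=\phi$ (giving (h)) versus $\psi\neq\phi$. When $\psi\neq\phi$ and $n\geq3$, equations obtained by contracting the Bianchi identity (the analogues of your ``PDEs relating $df_i$ to $\nabla\phi,\nabla\xi$'') force $\phi\psi=\psi\phi$ and then $\phi=\psi$, a contradiction. When $n=2$ and $\phi\psi=\psi\phi$, one gets $f_2+f_3=0$, whence the Ricci tensor is a multiple of the metric: this is how the Einstein structure (i) appears, not via any dimension-counting obstruction on non-constant $\alpha$.

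There is a further subcase you have not anticipated: $n=2$, $\psi\neq\phi$, $\phi\psi\neq\psi\phi$. Here the Bianchi identities give $2f_2+f_3=0$, $\phi\psi=-\psi\phi$, and $(\nabla_X\phi)Y\perp D$ for $X,Y\in D$; this makes $M$ locally $\psi$-symmetric in the sense of Blair--Vanhecke, hence its universal cover is a five-dimensional naturally reductive homogeneous space. The paper then invokes the Kowalski--Wegrzynowski classification of such spaces and computes the curvature of the Okumura connection against the canonical torsion to derive a contradiction. Nothing in your proposal would close this case; an over-determination/Einstein argument is not what rules it out.
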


\begin{rem}
\begin{enumerate}
\item[(i)]  The above theorem gives a local classification for generalized Sasakian space forms of dimenion greater than five.
The authors do not know any example in (i) with $\psi\neq\phi$.

\item[(ii)]
The function $f_2=0$ for spaces in  (b)--(f).
\item[(iii)]
A generalizes Sasakian space form with $f_2\neq0$ 
belongs to (g)--(i).
\item[(iv)]
The classes of spaces in the above theorem are not mutually exclusive.
An  $\beta$-Kenmotsu manifold $M$  which is (locally)  a warped product of $\mathbb R$ and a complex Euclidean space  
belongs to both  (e) and (g);  a Sasakian odd-dimensional sphere belongs to both  (a) and (h).
Besides, spaces in (b)--(e) are also included in (g) if they are subprojective (see \cite[pp. 329]{schouten} for precise definition).
 
\end{enumerate}
\end{rem}

Throughout this paper, all manifolds are assumed to be smooth and  connected.
For an $m$-dimensional Riemannian manifold $M$, 
we denote by $X^\flat$ the $1$-form dual to a vector field $X$ on $M$, $\omega^\sharp$ the vector field associated to an $1$-form $\omega$ on $M$,
by $d=\sum^{m}_{j=1}E^\flat_j\wedge\nabla_{E_j}$  the differential  and  
$\delta=-\sum^{m}_{j=1}i_{ E_j} \nabla_{E_j}$ the codifferential operators on $M$, 
where $\{E_1,\cdots,E_{m}\}$ is a local orthonormal frame field on $M$. 
Furthermore,  if there exists a unit vector field $\xi$ on $M$, 
we denote by $V=\nabla_{\xi}\xi$ and $\bar{X}=X-\eta(X)\xi$, for each $X\in TM$, where $\eta=\xi^\flat$.
As we consider local geometry of $M$, for a function $f$ on $M$, unless otherwise stated, by $f=0$ (resp. $f\neq0$) we mean $f$ is identically zero on $M$ 
(resp. $f$ is nowhere zero on $M$).

\section{Almost contact metric manifolds} 

A $(2n+1)$-dimensional Riemannian manifold $M$ with Riemannian metric  $\la, \ra$ is said to be almost contact metric if there exist on $M$ 
a $(1,1)$-tensor field $\phi$, a vector field $\xi$ and a $1$-form $\eta$ such that 
\begin{align}
\phi ^{2} = -\mathbb I_{TM} + \xi\otimes \eta,
	\quad \phi \xi& = 0,
	\quad \eta\circ \phi = 0,  \nonumber \\
\label{eqn:contact-metric}
\la\phi\:\cdot,\phi\:\cdot\ra	=\la\:\cdot,\cdot\ra&-\eta\otimes\eta.
\end{align}
The fundamental $2$-form $\Phi$ is given by $\Phi(\cdot,\cdot)=\la\:\cdot,\phi\:\cdot\ra$ and the CR-distribution $D$ is defined by $D=\ker\eta$.
Denote by $\nabla$ the Levi-Civita connetion on $M$.
If there are two functions $\alpha$ and $\beta$ on $M$ such that 
\[
(\nabla_X\phi)Y=\alpha\{-\eta(Y)X+\la  X,Y\ra\xi\}+\beta\{-\eta(Y)\phi X+\la\phi X,Y\ra\xi\}
\]
for any $X$, $Y\in TM$, 
then $M$ is called an \emph{$(\alpha,\beta)$-trans-Sasakian manifold}.
In particular,  $(0,\beta)$-trans-Sasakian manifolds are called  \emph{$\beta$-Kenmotsu manifolds} while  $(\alpha,0)$-trans-Sasakian manifolds are called   $\alpha$-Sasakian manifolds. 
By Sasakian (resp. Kenmotsu) manifolds, we mean $1$-Sasakian (resp. $1$-Kenmotsu) manifolds.
Further, an $0$-Kenmotsu manifold is called a cosymplectic manifold.
It is known that an $(\alpha,\beta)$-trans-Sasakian manifold of dimension greater than $3$ is either $\alpha$-Sasakian or $\beta$-Kenmotsu 
(cf. \cite{marrero}).

Let  $(P,g_P)$ be an $m$-dimensional Riemannian manifold.
Consider a doubly twisted product manifold $M=_a\mathbb R\times_b P$ with the doubly twsited product metric
\[
\la\ ,\ \ra=a^2\pi^*_1dt^2+b^2\pi^*_2g_P 
\]
where $a$ and $b$ are positive functions on $\mathbb R\times P$, 
$t$ is  the standard coordinate of $\mathbb R$, and 
$\pi_1$ and $\pi_2$ are  the projections from $\mathbb R\times P$ on $\mathbb R$ and $P$ respectively. 

Throughout this paper, we shall denote by the same $T$ the lift of a tensor field $T$ of $P$ (or $\mathbb R$) for the sake of simplicity.

Now we consider an almost Hermitian manifold $(P,J,g_P)$   with fundamental $2$-form 
$\Omega(\cdot,\cdot)=g_P(\cdot, J\cdot)$, we can define an almost contact structure on $M=_a\mathbb R\times_b P$ by
\begin{align}\label{eqn:cont_warped-product}
\Phi=b^2\Omega, \quad \eta=adt, \quad \xi=\eta^\sharp.
\end{align}
Note that for any $X$, $Y$,  $Z\in TM$, we have  
\begin{align*}
R(X  ,Y)Z
= & R(\bar X,\bar Y)\bar Z	+\eta(X)R(\xi,\bar Y)\bar Z-\eta(Y)R(\xi,\bar X)\bar Z +\eta(Z)R(\bar X,\bar Y)\xi\\
   & -\eta(Z)\eta(X)R(\bar Y,\xi)\xi+\eta(Z)\eta(Y)R(\bar X,\xi)\xi.
\end{align*}
In particular, for a warped product $M=\mathbb R\times_b P$, that is, $a=1$ and $b$ depends only on $\mathbb R$,
 by  applying \cite[Proposition 42]{oneill} in the above equation,
we can derive
\begin{align} \label{eqn:R_warped-product}
&R(X  ,Y)Z=R^P(\bar X,\bar Y)\bar Z		-\frac{\dot b^2}{b^2}\{\la Y,Z\ra X-\la X,Z\ra Y\}	\nonumber \\
&+	\left(\frac{\ddot b}b-\frac{\dot b^2}{b^2}	\right)
	   \{\eta(Z)\eta(X) Y
		-\eta(Z)\eta(Y) X-\la Y,Z\ra\eta(X)\xi+\la X,Z\ra\eta(Y)\xi\}
\end{align}
for any $X$,  $Y$, $Z\in TM$, where $R^P$ is the Riemannian curvature tensor of $P$ and $\dot b=\partial_t b$. 

The following results can be obtained directly from the above equation.


\begin{theo}[\cite{abc}] \label{theo:warped-product-gSSF}
Let  $P(F_1,F_2)$ be a generalized complex space form. Then, with the almost contact structure (\ref{eqn:cont_warped-product}), the warped product $M=\mathbb R\times_{b} P$ is a generalized Sasakian space form with functions   
\begin{align*}
f_1=\frac{F_1-\dot b^2}{b^2}, \quad 
f_2=\frac{F_2}{b^2}, \quad 
f_3=\frac{F_1-\dot b^2}{b^2}+\frac{\ddot b}b.
\end{align*}
\end{theo}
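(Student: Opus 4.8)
The plan is to compute the curvature tensor of the warped product $M = \mathbb R \times_b P$ directly from equation~(\ref{eqn:R_warped-product}) and match it against the defining identity~(\ref{eqn:gs}) for a generalized Sasakian space form. First I would substitute the curvature hypothesis on $P$ into (\ref{eqn:R_warped-product}): since $P(F_1, F_2)$ is a generalized complex space form, $R^P(\bar X, \bar Y)\bar Z$ expands as $F_1\{\langle \bar Y, \bar Z\rangle \bar X - \langle \bar X, \bar Z\rangle \bar Y\} + F_2\{\langle J\bar Y, \bar Z\rangle J\bar X - \langle J\bar X, \bar Z\rangle J\bar Y - 2\langle J\bar X, \bar Y\rangle J\bar Z\}$, where the metric on $P$ is understood via its lift (so $\langle\,,\rangle$ restricted to $D = \ker\eta$ agrees up to the factor $b^2$ with $g_P$, but since $F_1, F_2$ are the functions on $P$ this bookkeeping needs care — I would track whether $F_i$ here means the pullback or a rescaled version, and the statement's formulas $f_1 = (F_1 - \dot b^2)/b^2$ etc.\ tell me exactly how the scaling works out).

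The key step is then to re-express everything in terms of the ambient quantities $X, Y, Z \in TM$ rather than their barred projections. For the $R_1$-type piece I would use $\bar X = X - \eta(X)\xi$ to rewrite $F_1\{\langle \bar Y, \bar Z\rangle \bar X - \cdots\}$ together with the $-\dot b^2/b^2$ term and the $(\ddot b/b - \dot b^2/b^2)$ term; collecting the purely metric contributions gives the coefficient of $R_1$, and collecting the $\eta\otimes\eta$-contributions gives the coefficient of $R_3$. For the $R_2$-type piece, I would use that under (\ref{eqn:cont_warped-product}) the structure tensor $\phi$ acts on $D$ essentially as $J$ (with the appropriate $b$-scaling built into $\Phi = b^2\Omega$), and that $\phi\xi = 0$, $\eta\circ\phi = 0$, so $\langle \phi Y, Z\rangle = \langle \phi\bar Y, \bar Z\rangle$ and $\phi X = \phi\bar X$. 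This lets me identify the $F_2$-term of $R^P$ with $F_2 R_2(X,Y,Z)$ up to the scaling factor, yielding $f_2 = F_2/b^2$. Matching the three independent tensor types $R_1, R_2, R_3$ then forces the claimed formulas for $f_1, f_2, f_3$.

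The main obstacle I anticipate is the careful bookkeeping of the conformal/warping factor $b^2$: the metric on the $P$-factor inside $M$ is $b^2 g_P$, so inner products, the norm of $J$, and the identification of $\phi$ with $J$ all pick up powers of $b$, and one must verify these cancel consistently so that the curvature-like tensors $R_1, R_2, R_3$ — which are defined using the ambient metric $\langle\,,\rangle$ — appear with scalar coefficients that are genuinely functions (and in fact depend only on $t$ when $F_1, F_2$ are constant, though here they need not be). A secondary point is confirming that the mixed terms involving $\eta(X)R(\xi, \bar Y)\bar Z$ and so on, which I would evaluate via O'Neill's formula (as already done in deriving (\ref{eqn:R_warped-product})), contribute exactly to the $R_3$ coefficient and nothing extraneous; since (\ref{eqn:R_warped-product}) is given, this reduces to a finite linear-algebra check on the span of $\{R_1, R_2, R_3\}$ and the $\eta$-free versus $\eta$-carrying parts. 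Once the coefficient matching is organized by tensor type, the proof is essentially a verification, so I would keep the exposition to displaying the substitution and the resulting identification rather than grinding through every term.
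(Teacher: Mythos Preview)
Your approach is correct and is precisely what the paper does: the paper states that the result ``can be obtained directly from the above equation,'' namely (\ref{eqn:R_warped-product}), and gives no further details. Your plan to substitute the generalized complex space form expression for $R^P$ into (\ref{eqn:R_warped-product}) and then match coefficients against $R_1$, $R_2$, $R_3$ is exactly this direct verification, and your remarks about the $b^2$-bookkeeping and the identification $\phi\bar X = J\bar X$ are the only points requiring care.
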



\section{$N(k)$-manifolds} 

A Riemannian manifold  $M$ is called an  $N(k)$-manifold if  there exists a unit vector field $\xi$ on $M$ such that 
the curvature tensor satisfies the $k$-nullity condition:
\begin{align}\label {la1.1}
R(X,Y)\xi=k\{\eta(Y)X-\eta(X)Y\}, \quad X,Y\in TM 
\end{align}
where $k$ is a function on $M$. 
In particular, 
a generalized Sasakian space form is an $N(k)$-manifold with $k=f_1-f_3$.

\begin{theo} \label{theo:Nk-Sasakian}
Let $M$ be an $N(k)$-manifold of dimension $m\geq 5$. Suppose  
\[
\la\nabla_X\xi,Y\ra+\la\nabla_Y\xi, X\ra=2\beta\la\bar X,Y\ra, \quad X,Y\in TM
\]
where $\beta$ is a function on $M$.
Let  $T:=\nabla\xi-\beta(\mathbb I_{TM}-  \xi\otimes \eta) $ and
 $\alpha:=||T||/\sqrt{m-1}$ .
 Then  we have
\begin{enumerate}
\item[(a)]  $\xi\beta+k+\beta^2=\alpha^2$
\item[(b)] $d\alpha+2\alpha\beta\eta=0$
\item[(c)] $d\beta=(\xi\beta)\eta=(\alpha^2-k-\beta^2)\eta$. 
\end{enumerate}

In particular, if $\alpha\neq0$ on $M$, then $\beta=0$ and  $k=\alpha^2$ is constant.
Moreover  $(\psi,\xi, \eta, \la,\ra)$  is an $\alpha$-Sasakian structure on $M$,
where $\psi=-\alpha^{-1}T$.
\end{theo}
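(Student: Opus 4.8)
The plan is to extract all three identities from differentiating the two structural hypotheses, and then feed them back into each other to force the rigidity conclusion. First I would differentiate the symmetrized equation $\la\nabla_X\xi,Y\ra+\la\nabla_Y\xi,X\ra=2\beta\la\bar X,Y\ra$ covariantly and antisymmetrize appropriately so as to produce, via the Ricci identity, an expression for $\la R(X,Y)\xi,Z\ra$ in terms of $\nabla\beta$, $\beta^2$, and the tensor $T$; comparing this with the $N(k)$-condition (\ref{la1.1}) will yield the algebraic relation linking $\beta^2$, $\xi\beta$, $k$ and $\|T\|^2$, which after dividing out by $m-1$ is exactly (a), $\xi\beta+k+\beta^2=\alpha^2$, and simultaneously pins down $d\beta$ as a multiple of $\eta$, giving (c) once (a) is known. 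The key point making this work is that $T$ is trace-free and $\la T\xi,\cdot\ra=0$, so contractions of $T^2$ collapse to $\|T\|^2$ times the obvious projector.

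Next I would obtain (b) by differentiating the $N(k)$-identity itself and invoking the second Bianchi identity (this is the ``important data out of the second Bianchi identity'' alluded to in the introduction): contracting the second Bianchi identity for $R(X,Y)\xi$ and using (\ref{la1.1}) produces a first-order PDE for $k$, and combining it with the already-established relation (a) and the expression for $\nabla T$ coming from the first differentiation converts it into $d\alpha+2\alpha\beta\eta=0$. Equivalently, one can differentiate the scalar relation (a) directly: apply $d$ to $\xi\beta+k+\beta^2=\alpha^2$, use (c) to handle $d\beta$ and $d(\xi\beta)$, use the Bianchi-derived equation for $dk$, and read off $d\alpha$. Either route is a finite computation once the curvature expression from step one is in hand.

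For the ``in particular'' clause, suppose $\alpha\neq0$ everywhere. Taking the covariant derivative of (b) in the direction of $\xi$, or simply pairing (b) with $\xi$, gives $\xi\alpha=-2\alpha\beta$, while pairing (b) with any $X\perp\xi$ gives $X\alpha=0$; substituting these into the exterior derivative of (b) (i.e.\ using $d^2\alpha=0$, equivalently $dd\alpha=0$) forces $d(\alpha\beta)\wedge\eta + \alpha\beta\,d\eta$ to vanish, and because $d\eta\neq0$ would be incompatible (here one uses that the symmetric part of $\nabla\xi$ is $\beta$ times the projector, so $\nabla\xi$ is determined by $\beta$ and the skew part $-\alpha\psi$), one concludes $\alpha\beta=0$, hence $\beta=0$. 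Then (a) gives $k=\alpha^2$, (c) gives $dk=0$ so $k$ is constant, and (b) gives $d\alpha=0$ consistently. With $\beta=0$ we have $\nabla\xi=T=-\alpha\psi$, so $\psi=-\alpha^{-1}\nabla\xi$ satisfies $\psi\xi=0$, $\eta\circ\psi=0$, and $\psi^2=-\mathbb I+\xi\otimes\eta$ after checking that $\|T\|^2=(m-1)\alpha^2$ forces $T^2=-\alpha^2(\mathbb I-\xi\otimes\eta)$; finally one verifies $(\nabla_X\psi)Y=\eta(Y)X-\la X,Y\ra\xi$ up to the factor $\alpha$, i.e.\ the $\alpha$-Sasakian identity, using the first-differentiation formula for $\nabla T$ together with $\beta=0$ and $k=\alpha^2$ constant.

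The main obstacle I anticipate is the bookkeeping in step one: squeezing a clean formula for $\la R(X,Y)\xi,Z\ra$ out of the symmetrized derivative hypothesis requires carefully separating the symmetric part $\beta(\mathbb I-\xi\otimes\eta)$ from the skew part $T$ of $\nabla\xi$ and tracking how $\nabla\beta$ enters, and one must be attentive to the terms involving $\eta(X)$, $\eta(Y)$, $V=\nabla_\xi\xi$ since the hypothesis only controls the behaviour on $\bar X$, $\bar Y$. The dimension hypothesis $m\geq5$ enters precisely when one contracts a $(0,4)$-tensor that is curvature-like in its first pair but need not be in the second, to conclude that a certain trace-free symmetric endomorphism built from $T$ is a scalar multiple of the identity; in low dimension that contraction loses information. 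Everything after the curvature formula is algebra plus the two exterior-derivative closedness checks $d(d\alpha)=0$ and $d(d\beta)=0$.
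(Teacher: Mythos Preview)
Your overall plan---compute $R(X,Y)\xi$ from the decomposition $\nabla\xi=T+\beta(\mathbb I-\xi\otimes\eta)$ and compare with the $N(k)$-condition---matches the paper's opening move, and carried out carefully it will give the key intermediate formula the paper calls (\ref{la1.7b}), an explicit expression for $(\nabla_Z T)X$.  Two points in your write-up, however, diverge from the paper and one of them hides a misconception.

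First, the claim that ``$\|T\|^2=(m-1)\alpha^2$ forces $T^2=-\alpha^2(\mathbb I-\xi\otimes\eta)$'' is false as stated: a skew endomorphism of $D$ with prescribed trace of its square need not have $T^2$ proportional to the identity.  In the paper this is \emph{deduced}, not assumed: evaluating the $\nabla T$ formula at $X=\xi$ yields $T^2=-(\xi\beta+k+\beta^2)(\mathbb I-\xi\otimes\eta)$ directly (this is (\ref{la1.7c})), and taking traces then gives (a).  So your computation in step one must go far enough to produce the full $\nabla T$ expression, not merely a traced version of it; the almost-contact relation $\psi^2=-\mathbb I+\xi\otimes\eta$ for $\psi=-\alpha^{-1}T$ depends on this.

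Second, the paper does \emph{not} use the second Bianchi identity for (b); it extracts (b) and (c) simultaneously from the same $\nabla T$ formula.  Concretely, rewriting (\ref{la1.7b}) in terms of $\psi$ and taking the $\psi X$-component for $X\perp\xi$ produces $(X\beta)\psi X-(\psi X\beta)X+\|X\|^2(\grad\alpha+2\alpha\beta\xi)=0$; the hypothesis $m\geq5$ (so that one can pick $X,\psi X,Y,\psi Y$ independent in $D$) is used \emph{here} to conclude $\grad\alpha+2\alpha\beta\xi=0$ and $d\beta=(\xi\beta)\eta$.  Your Bianchi route may be salvageable, but as written it risks circularity (you invoke (c) to derive (b), yet your derivation of (c) is not independent of (b)), and you mislocate where the dimension enters.

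Finally, your ``in particular'' argument via $d^2\alpha=0$ is a genuinely different and more self-contained alternative to the paper's route.  The paper instead simplifies the $\nabla\psi$ formula to recognize an $(\alpha,\beta)$-trans-Sasakian structure and then cites Marrero's theorem (trans-Sasakian in dimension $>3$ is either $\alpha$-Sasakian or $\beta$-Kenmotsu) to force $\beta=0$.  Your argument---$d\alpha=-2\alpha\beta\eta$, hence $0=d^2\alpha=-2\alpha\beta\,d\eta$ after noting $d(\alpha\beta)\wedge\eta=0$ from (b)--(c), and $d\eta=2\la T\cdot,\cdot\ra\neq0$ when $\alpha\neq0$---avoids that external reference and is a nice simplification, provided (b) and (c) are already in hand by the direct method above.
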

\begin{proof}
By the hypothesis,  we can see that $T$ is skew-symmetric and $T\xi=0$.
Further, for any $X$, $Y$,  $Z\in TM$,
we have
\begin{align*}
(\nabla _{X}\nabla _{Y}-\nabla_{\nabla_XY})\xi	=
&(\nabla_XT)Y+(X\beta)\bar Y-\beta\la TX,Y\ra\xi \notag	\\
&	 -\beta^2\la \bar X,Y\ra\xi 
	-\beta\eta(Y) TX-\beta^2\eta(Y)\bar X.	
\end{align*}
Hence, from (\ref{la1.1}) and the above equation, we calculate
\begin{align}
&\theta(X)\la  \bar Y,Z\ra 	-\theta(Y)\la \bar Z,X\ra													
+\la(\nabla_XT)Y,Z\ra+\la(\nabla_YT)Z,X\ra	\notag\\
  &+\beta\{\eta(X)\la TY,Z\ra+\eta(Y)\la TZ,X\ra-2\eta(Z)\la TX,Y\ra\}=0 \label{la1.5} 
\end{align}
where $\theta:=d\beta+(k+\beta^2)\eta$.
First, taking cyclic sum over $X,\,Y$ and $Z$ in the above equation, and then compare the obtained equation  with (\ref{la1.5}),
we obtain 
\begin{align} \label{la1.7b}
&(\nabla_ZT)X=\theta(X)\bar  Z-\la \bar Z, X\ra\theta^\sharp
  +\beta\{-\eta(X) TZ+\la TZ,X\ra\xi-2\eta(Z)TX\}.
\end{align}
By putting $X=\xi$ in (\ref{la1.7b}), we obtain
\begin{align} \label{la1.7c}
T^2=\{\xi\beta+k+\beta^2\}(\mathbb I_{TM}-  \xi\otimes \eta).
\end{align}
Hence, Statement (a)  can be deduced from this equation.

If $\alpha=0$, then Statement (b) is trivial. 
Note that $T=0$ in this case. Since $\dim M\geq 3$, 
we can  derive immediately from (\ref{la1.7b}) that 
$\theta=0$ and so Statement (c) is obtained.

Next, we consider  $\alpha\neq0$.
Let $\psi=-\alpha^{-1}T$. Then by virtue of  (\ref{la1.7c}), we can verify that $(\psi,\xi,\eta,\la,\ra)$ is an almost contact metric structure on $M$.
Next (\ref{la1.7b}) gives
\begin{align} \label{eqn:nabla_phi}
-(Z\alpha)\psi X-\alpha (\nabla_Z\psi)X
=\theta(X)\bar Z-\la \bar Z, X\ra\theta^\sharp&		\notag\\
 +\alpha\beta\{\eta(X)\psi Z-\la \psi Z,X\ra\xi+2\eta(Z)\psi X\},& \quad X,Z\in TM.	
\end{align}
Now consider $X\perp \xi$,  after taking inner product with $\psi X$ on both sides of this equation,  yields 
\[
(X\beta)\psi X-(\psi X\beta) X+\la X,X\ra\{\grad \alpha+2\alpha\beta\xi\}=0.	
\]
Since $\dim M\geq 5$,  the above equation gives Statement (b) and 
$d\beta=(\xi\beta)\eta$. Hence we obtain Statement (c) and $\theta=\alpha^2\eta$.
It follows that (\ref{eqn:nabla_phi}) can be simplified to
\begin{align*}
\alpha (\nabla_Z\psi)X
=&\alpha^2\{\la  Z, X\ra\xi	-\eta(X) Z\}	
   +\alpha\beta\{-\eta(X)\psi Z+\la\psi Z,X\ra\xi\}, \quad X,Z\in TM.	
\end{align*}
As  $\alpha\neq0$,  the above equation implies that $(\psi,\xi,\eta,\la,\ra)$ is an $(\alpha,\beta)$-trans-Sasakian structure on $M$. Since an $(\alpha,\beta)$-trans-Sasakian manifold of dimension greater than 3 is either  
$\alpha$-Sasakian or $\beta$-Kenmotsu (cf. \cite{marrero}) and $\alpha\neq0$, we conclude that 
$\beta=0$ and so $(\psi,\xi,\eta,\la,\ra)$ is an $\alpha$-Sasakian structure on $M$. 
It further follows from Statements (a)--(b) that $k=\alpha^2$ is a constant.
\end{proof}
 
\begin{rem}
For $\beta=0$, Theorem~\ref{theo:Nk-Sasakian} also holds  for  $m=3$ and is reduced to the characterizations of $\alpha$-Sasakian manifolds given in \cite{boyer,okumura}. 
A similar result was also obtained in \cite{ghosh} under the setting of generalized Sasakian space form.
\end{rem}

\begin{theo} \label{theo:pre-conformal}
Let $M$ be an almost contact metric manifold of dimension $2n+1\geq 5$. Then  
$M$ is an $N(k)$-manifold  and   $\nabla \xi =\beta(\mathbb I_{TM}-  \xi\otimes \eta)$ if and only if   $M$ is locally a warped product 
$\mathbb R\times_{b}P$ with $\ddot b=-kb $ and $\beta=\dot b/b$, where $P$ is an almost Hermitian manifold.
\end{theo}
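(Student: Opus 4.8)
Proof proposal for Theorem 3.3.

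\textbf{Strategy.} This is a local characterization, so the plan is to prove the two implications separately, with the nontrivial direction being the ``only if'' part. For the ``if'' direction, I would start from a warped product $\mathbb R\times_b P$ with $P$ almost Hermitian, equip it with the almost contact structure (\ref{eqn:cont_warped-product}) (here $a=1$, so $\eta=dt$, $\xi=\partial_t$), and simply read off the needed identities from the standard warped product formulas. Using \cite[Proposition 42]{oneill} one gets $\nabla_X\xi=(\dot b/b)\bar X$, i.e. $\nabla\xi=\beta(\mathbb I_{TM}-\xi\otimes\eta)$ with $\beta=\dot b/b$; and feeding $Z=\xi$ into (\ref{eqn:R_warped-product}) (or computing directly) gives $R(X,Y)\xi=-(\ddot b/b)\{\eta(Y)X-\eta(X)Y\}$ after the $\dot b^2/b^2$ terms cancel, so $M$ is an $N(k)$-manifold with $k=-\ddot b/b$, which is exactly $\ddot b=-kb$. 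That closes the easy direction.

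\textbf{The ``only if'' direction.} Here I assume $M^{2n+1}$ is almost contact metric, an $N(k)$-manifold, and $\nabla\xi=\beta(\mathbb I_{TM}-\xi\otimes\eta)$. The first task is to produce the product structure. Since $\nabla\xi=\beta(\mathbb I-\xi\otimes\eta)$, the vector field $\xi$ satisfies $\nabla_\xi\xi=0$, so its integral curves are geodesics and $\xi$ is geodesic; moreover $\eta$ is closed: $d\eta(X,Y)=\la\nabla_X\xi,Y\ra-\la\nabla_Y\xi,X\ra=\beta\la\bar X,Y\ra-\beta\la\bar Y,X\ra=0$. A closed unit $1$-form is locally $dt$ for a function $t$, so locally $\xi=\grad t$ and the level sets of $t$ foliate $M$ by hypersurfaces orthogonal to $\xi$; call a leaf $P$. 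The formula $\nabla_X\xi=\beta\bar X$ says the second fundamental form of each leaf is $-\beta$ times its induced metric (i.e.\ the leaves are totally umbilical), and $\beta$ is a function of $t$ alone: indeed, from Theorem~\ref{theo:Nk-Sasakian}(c) applied with this $\beta$ and with $T=0$ (so $\alpha=0$), one gets $d\beta=(\xi\beta)\eta$, meaning $\beta$ is constant along the leaves. Umbilicity with a leaf-constant mean curvature, together with $\xi$ geodesic, is precisely the infinitesimal condition for a warped product; so by the local de~Rham / Hiepko-type splitting one gets $M\cong_{\text{loc}}\mathbb R\times_b P$ with $b=b(t)>0$ and $\beta=\dot b/b$. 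Then $\nabla\xi=\beta(\mathbb I-\xi\otimes\eta)$ combined with the $N(k)$ condition forces, as in the ``if'' direction, $k=-\ddot b/b$, i.e.\ $\ddot b=-kb$. Finally, $P$ inherits an almost complex structure: define $J$ on $TP$ by $J=\phi|_{TP}$ (rescaled appropriately via $\Phi=b^2\Omega$); since $\phi^2=-\mathbb I+\xi\otimes\eta$ and $\phi\xi=0$, the restriction to the leaf squares to $-\mathbb I$, and $\la\phi\cdot,\phi\cdot\ra=\la\cdot,\cdot\ra-\eta\otimes\eta$ restricts to compatibility of $J$ with $g_P$. This exhibits $P$ as an almost Hermitian manifold and matches the structure (\ref{eqn:cont_warped-product}).

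\textbf{Main obstacle.} The delicate point is the local warped-product decomposition itself: one must check that the distribution $\xi^\perp$ is integrable with totally umbilical leaves and that the line field $\mathbb R\xi$ is autoparallel, and then invoke the standard local splitting theorem for such a configuration to actually produce the function $b$ and the metric $a^2dt^2+b^2g_P$. Verifying that $b$ depends only on $t$ (equivalently, that $\beta$ is leaf-constant, which is where Theorem~\ref{theo:Nk-Sasakian} or a direct computation using the $N(k)$ identity enters) and that the warping function recovered from the geometry really gives $\ddot b=-kb$ requires care; the almost-complex-structure bookkeeping on the fiber, by contrast, is purely algebraic and routine once the metric splitting is in hand.
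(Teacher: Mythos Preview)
Your proposal is correct and follows essentially the same route as the paper's proof: for the ``only if'' direction you use $\nabla_\xi\xi=0$ and $d\eta=0$ to get integrability of $D$ and autoparallelism of $\mathbb R\xi$, invoke Theorem~\ref{theo:Nk-Sasakian}(c) to see that $\beta$ is constant along the leaves (so $D$ is spherical), and then apply a local splitting theorem to obtain the warped product---the paper does exactly this, citing \cite{reckziegel-schaaf} where you invoke a Hiepko-type result. The ``if'' direction is likewise the same computation from the warped-product formulas (the paper cites \cite[Proposition~35]{oneill} rather than Proposition~42, but the content is identical), and the almost Hermitian structure on $P$ is obtained in both cases by restricting $\phi$ to the leaf.
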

\begin{proof}
By the hypothesis, we see that $\nabla_{\xi}\xi=0$. Hence both distributions $D$ and $\mathbb R\xi$ are integrable. 
Moreover,   $D$ is totally umbilical  and $\mathbb R\xi$ is autoparallel.  By Theorem~\ref{theo:Nk-Sasakian}(c), $\beta$ is constant along $D$, 
so $D$ is spherical. 
As a result, 
 $M$ is locally a warped product of $\mathbb R$ and a leaf $P$ of $D$ 
(cf. \cite{reckziegel-schaaf}). Since $P$ is an invariant submanifold in $M$ normal to $\xi$, it admits an almost Hermitian structure. 

Conversely, suppose $M=\mathbb R\times_b P$, where $P$ is an almost Hermitian manifold . Then it follows from 
(\ref{eqn:R_warped-product}) that  
\[
R(X,Y)\xi=k\{\eta(Y)X-\eta(X)Y\}, \quad k=-\frac{\ddot b}b.
\] 
Next,  by virtue of \cite[Proposition 35]{oneill}, we see that  $\nabla_{\xi}\xi=0$ and 
$\nabla_X\xi=(\xi b/b)X=(\dot b/b)X$, for $X\in D$. These imply that 
$\nabla \xi =\beta(\mathbb I_{TM}-  \xi\otimes \eta)$,  where $\beta=\dot b/b$,  and the proof is completed.
\end{proof}


By Theorem~\ref{theo:pre-conformal} and (\ref{eqn:R_warped-product}), we can easily obtain the following local characterization for the class of generalized Sasakian space form given in Theorem~\ref{theo:warped-product-gSSF}.

\begin{cor} \label{cor:pre-warped_prod_gSSF}
Let $M^{2n+1}(f_1,f_2,f_3)$ be a generalized Sasakian Space form, $n\geq2$. Then  
$\nabla \xi =\beta(\mathbb I_{TM}-  \xi\otimes \eta) $ if and only if   $M$ is locally a warped product 
$\mathbb R\times_{b}P$ with $\ddot b=-kb $ and $\beta=\dot b/b$, where $P$ is a generalized complex space form with functions
$F_1=b^2f_1+\dot b^2$ and $F_2=b^2f_2$.
\end{cor}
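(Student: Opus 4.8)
The plan is to read off the corollary from Theorem~\ref{theo:pre-conformal}, the warped-product curvature formula (\ref{eqn:R_warped-product}), the defining identity (\ref{eqn:gs}) of a generalized Sasakian space form, and Theorem~\ref{theo:warped-product-gSSF}; the only content beyond these is the identification of the geometry on the leaf.

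For the forward implication, recall that $M^{2n+1}(f_1,f_2,f_3)$ is an $N(k)$-manifold with $k=f_1-f_3$. Assuming $\nabla\xi=\beta(\mathbb I_{TM}-\xi\otimes\eta)$ and using $2n+1\geq 5$, Theorem~\ref{theo:pre-conformal} gives that, locally, $M=\mathbb R\times_b P$ with $\ddot b=-kb$ and $\beta=\dot b/b$, where $P$ is an almost Hermitian manifold arising as a leaf of the CR-distribution $D=\ker\eta$; its almost complex structure $J$ is the restriction of $\phi$, and its metric $g_P$ is related to the ambient one by $\la\cdot,\cdot\ra|_{TP}=b^2 g_P$. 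It remains to compute $R^P$. The key step is to evaluate (\ref{eqn:R_warped-product}) and (\ref{eqn:gs}) on triples $X,Y,Z\in D$: every term carrying a factor of $\eta$ then vanishes, so the $R_3$-contribution and the $(\ddot b/b-\dot b^2/b^2)$-contribution drop out, leaving
\begin{align*}
R^P(X,Y)Z-\frac{\dot b^2}{b^2}\bigl\{\la Y,Z\ra X-\la X,Z\ra Y\bigr\}=f_1R_1(X,Y,Z)+f_2R_2(X,Y,Z).
\end{align*}
Substituting $\la\cdot,\cdot\ra|_{TP}=b^2g_P$ and $\phi|_D=J$ turns each ambient inner product into $b^2$ times the corresponding $g_P$-quantity, and after collecting the $R_1$-type terms one obtains exactly the generalized complex space form identity for $R^P$ with $F_1=b^2f_1+\dot b^2$ and $F_2=b^2f_2$.

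To see that $F_1,F_2$ are honest functions on $P$ (not on $\mathbb R\times P$), note that $R^P$ and the two curvature-like tensors built from $g_P$ and $J$ are independent of the $\mathbb R$-coordinate, while those two tensors are pointwise linearly independent once $\dim_{\mathbb C}P=n\geq 2$; hence the coefficients in the above identity are uniquely determined and independent of $t$, so $P$ is a generalized complex space form with the asserted functions. For the converse, if $M=\mathbb R\times_b P$ locally with $P$ a generalized complex space form and with $\ddot b=-kb$, $\beta=\dot b/b$, $F_1=b^2f_1+\dot b^2$, $F_2=b^2f_2$, then Theorem~\ref{theo:warped-product-gSSF} shows $M$ is a generalized Sasakian space form with functions $(F_1-\dot b^2)/b^2=f_1$, $F_2/b^2=f_2$ and $(F_1-\dot b^2)/b^2+\ddot b/b=f_1-k=f_3$, consistently with $k=f_1-f_3$; and the converse half of Theorem~\ref{theo:pre-conformal} yields $\nabla\xi=(\dot b/b)(\mathbb I_{TM}-\xi\otimes\eta)=\beta(\mathbb I_{TM}-\xi\otimes\eta)$.

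The substantive points, rather than genuine obstacles, are the bookkeeping of the $b^2$-rescalings when passing between the ambient structure and the fibre structure (including the identification $\phi|_D=J$), and the linear-independence remark needed to force $t$-independence of $F_1,F_2$; everything else is immediate from Theorems~\ref{theo:pre-conformal} and \ref{theo:warped-product-gSSF}.
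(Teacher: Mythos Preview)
Your proposal is correct and follows essentially the same route as the paper: invoke Theorem~\ref{theo:pre-conformal} for the warped-product splitting, then combine (\ref{eqn:R_warped-product}) with (\ref{eqn:gs}) on $D$ to identify $P$ as a generalized complex space form, and use Theorem~\ref{theo:warped-product-gSSF} together with the converse of Theorem~\ref{theo:pre-conformal} for the other direction. The paper's own proof is a one-line pointer to these ingredients; your linear-independence remark securing $t$-independence of $F_1,F_2$ is a detail the paper leaves implicit.
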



\section{Some identities of generalizes Sasakian space forms}
Let $M^{2n+1}(f_1,f_2,f_3)$  be a generalized Sasakian space form, $n\geq2$. By considering the Bianchi's second identity
\[
\mathfrak S_{X,Y,Z}(\nabla_X R)(Y,Z)W=0
\]
where $\mathfrak S$ represents the cyclic sum over $X$, $Y$ and $Z$, we obtain
\begin{align}\label{eqn:pre-600}
\mathfrak S_{X,Y,Z}(A_1+A_2+A_3+A_4+A_5+A_6)=0
\end{align}
where 
\begin{align*}
A_1=&(Xf_1)\{\la Z,W\ra Y-\la Y,W\ra Z\} \\
A_2=&(Xf_2)\{\la\phi Z,W\ra\phi Y-\la\phi Y,W\ra\phi Z-2\la\phi Y,Z\ra\phi  W\}\\
A_3=&(Xf_3)\{\eta(W)\eta(Y)Z-\eta(W)\eta(Z)Y-\la Z,W\ra\eta(Y)\xi+\la Y,W\ra\eta(Z)\xi\}	\\
A_4=&f_2\{\la(\nabla_X\phi) Z,W\ra\phi Y-\la (\nabla_X\phi) Y,W\ra\phi Z-2\la(\nabla_X\phi) Y,Z\ra\phi  W	\\
					&  +       \la\phi Z,W\ra(\nabla_X\phi) Y-\la\phi Y,W\ra(\nabla_X\phi) Z-2\la\phi Y,Z\ra(\nabla_X\phi)  W\}\\
A_5=&f_3\{\la\nabla_X\xi,W\ra\eta(Y)Z-\la\nabla_X\xi,W\ra\eta(Z)Y-\la Z,W\ra\eta(Y)\nabla_X\xi+\la Y,W\ra\eta(Z)\nabla_X\xi\}	\\
A_6=&f_3\{\eta(W)d\eta(X,Y)Z-\la Z,W\ra d\eta(X,Y)\xi\}.
\end{align*}

Observe that 
\begin{align*}
\mathfrak S_{X,Y,Z}A_5
=&f_3\mathfrak S_{X,Y,W}\la\nabla_X\xi,W\ra\{\eta(Y)\bar Z-\eta(Z)\bar Y\}	\\
  &+f_3\mathfrak S_{X,Y,Z}\la\bar X,W\ra\{\eta(Y)\nabla_Z\xi-\eta(Z)\nabla_Y\xi\}
\end{align*}
\begin{align*}
\mathfrak S_{X,Y,Z}(A_3+A_6)
=\mathfrak S_{X,Y,Z}\{(df_3\wedge\eta)(X,Y)+f_3d\eta(X,Y)\}\{\eta(W)\bar Z-\la \bar Z,W\ra\xi\}  
\end{align*}
\begin{align*}
\mathfrak S_{X,Y,Z} A_1
=&\mathfrak S_{X,Y,Z}(Xf_1)\{\la\bar Z,W\ra \bar Y-\la\bar Y,W\ra\bar Z
      +\la\bar Z,W\ra \eta(Y)\xi-\la\bar Y,W\ra\eta(Z)\xi	\\
	& +\eta(Z)\eta(W)\bar Y-\eta(Y)\eta(W)\bar Z\} 	\\
=&\mathfrak S_{X,Y,Z}(Xf_1)\{\la\bar Z,W\ra \bar Y-\la\bar Y,W\ra\bar Z\}\\
 &   -\mathfrak S_{X,Y,Z}(df_1\wedge\eta)(X,Y)\{\eta(W)\bar Z-\la \bar Z,W\ra\xi\}.
\end{align*}
It follows that 
\begin{align*}
\mathfrak S_{X,Y,Z}(A_1+A_3+A_6)
=\mathfrak S_{X,Y,Z}(Xf_1)\{\la\bar Z,W\ra \bar Y-\la\bar Y,W\ra\bar Z\}\\
+\mathfrak S_{X,Y,Z}(-dk\wedge\eta+f_3d\eta)(X,Y)\{\eta(W)\bar Z-\la \bar Z,W\ra\xi\}
\end{align*}
where $k=f_1-f_3$.
By substituting into (\ref{eqn:pre-600}), we obtain
\begin{align}\label{eqn:600}
&\mathfrak S_{X,Y,Z}\Big((Xf_1)\{\la\bar Z,W\ra \bar Y-\la\bar Y,W\ra\bar Z\} \nonumber\\
&+(Xf_2)\{\la\phi Z,W\ra\phi Y-\la\phi Y,W\ra\phi Z-2\la\phi Y,Z\ra\phi  W\}\nonumber\\
&+(-dk\wedge\eta+f_3d\eta)(X,Y)\{\eta(W)\bar Z-\la \bar Z,W\ra\xi\} \nonumber\\
&+f_3\la\nabla_X\xi,W\ra\{\eta(Y)\bar Z-\eta(Z)\bar Y\}	
   +f_3\la\bar X,W\ra\{\eta(Y)\nabla_Z\xi-\eta(Z)\nabla_Y\xi\}	\nonumber\\
&+f_2\{\la(\nabla_X\phi) Z,W\ra\phi Y-\la (\nabla_X\phi) Y,W\ra\phi Z-2\la(\nabla_X\phi) Y,Z\ra\phi  W	\nonumber\\
& +        \la\phi Z,W\ra(\nabla_X\phi) Y-\la\phi Y,W\ra(\nabla_X\phi) Z-2\la\phi Y,Z\ra(\nabla_X\phi)  W\}\Big)=0
\end{align}
for any $X$, $Y$, $Z$, $W\in TM$.

By putting $W=\xi$ in (\ref{eqn:600}), we have 
\begin{align}\label{eqn:00}
&\mathfrak S_{X,Y,Z}\Big((-dk\wedge\eta+f_3d\eta)(X,Y)\bar Z \nonumber\\
&+f_2\{-\la \nabla_X\xi,\phi Z\ra\phi Y+\la \nabla_X\xi,\phi Y\ra\phi Z+2\la\phi Y,Z\ra\phi\nabla_X\xi\}\Big)=0.
\end{align}
If we put $Z=\xi$ and $X$, $Y\in   D$ in (\ref{eqn:00}), then it becomes
\begin{align*}
&-\{Yk+f_3V^\flat(Y)\}X+\{Xk+f_3V^\flat(X)\}Y\\
&+f_2\{-V^\flat(\phi Y)\phi X+V^\flat(\phi X)\phi Y+2\la\phi X,Y\ra\phi V\}=0.
\end{align*}
By  suitable choices of $X$ and $Y$ in  the above equation, gives
\begin{align}
f_2V&=0 \label{eqn:10} \\
dk+f_3V^\flat&=(\xi k)\eta.  \label{eqn:20}
\end{align}
It follows from  (\ref{eqn:00}) and (\ref{eqn:20}) that 
\begin{align}\label{eqn:30}
&\mathfrak S_{X,Y,Z}\Big(f_3d\eta( X,  Y)\la Z,W\ra
				+f_2\{-\la \nabla_X\xi,\phi Z\ra\la \phi Y,W\ra \nonumber\\
&+\la \nabla_X\xi,\phi Y\ra\la\phi Z,W\ra+2\la\phi Y,Z\ra\la\phi\nabla_X\xi,W\ra\}\Big)=0, \quad 
X,Y,Z,W\in D.
\end{align}
Contraction at  $X$ and $W$ over a local orthonormal frame on $D$, gives  
\begin{align}\label{eqn:40}
\{3f_2+(2n-2)f_3\}d\eta(Y,Z)-f_2d\eta(\phi Y,\phi Z)+2f_2\delta\Phi(\xi)\la\phi Y,Z\ra=0, \quad Y,Z\in D
\end{align}
where we have used 
$\sum_{j=1}^{2n}d\eta(\phi E_j, E_j)=2\delta\Phi(\xi)$, where  $\{E_1,\cdots,E_{2n+1}\}$ is a local orthonormal frame  on $TM$. The above equation implies that  
\begin{align}
\{2f_2+(n-1)f_3\}\{d\eta(Y,Z)-d\eta(\phi Y,\phi Z)\}&=0, \quad Y,Z\in   D \label{eqn:60} \\
(f_2-f_3)\delta\Phi(\xi)&=0. \label{eqn:50}
\end{align}

\begin{lem}\label{lem:f_3_neq_0}
Let $M^{2n+1}(f_1,f_2,f_3)$ be a generalized Sasakian space form,  $n\geq 2$. 
Suppose $f_2=0$ and $f_3\neq0$. Then 
\begin{itemize}
\item[(a)] $df_3=f_3V^\flat+(\xi f_3)\eta$
\item[(b)] $\nabla\xi=\beta(\mathbb I_{TM}-  \xi\otimes \eta)+ V\otimes \eta$
\item[(c)] $df_1=-2\beta f_3\eta$
 \end{itemize}
where $\beta=-(1/2n)\delta\eta$.
\end{lem}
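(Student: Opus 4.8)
The plan is to squeeze everything out of the already-derived consequences of the second Bianchi identity, specialized to the present case $f_2=0$, $f_3\neq0$. First, since $f_2=0$ and $n\geq2$, equation \eqref{eqn:40} collapses to $(2n-2)f_3\,d\eta(Y,Z)=0$ for $Y,Z\in D$, so $d\eta$ vanishes on $D\times D$. Because $\xi$ is a unit field, $\la\nabla_X\xi,\xi\ra=0$ for all $X$, so $V=\nabla_\xi\xi\in D$ and $\nabla_X\xi\in D$ for $X\in D$; combined with $d\eta|_{D\times D}=0$, which says exactly that $(X,Y)\mapsto\la\nabla_X\xi,Y\ra$ is symmetric on $D$, we may write $\nabla_X\xi=SX$ for $X\in D$, where $S$ is a symmetric field of endomorphisms of $D$. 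Moreover, evaluating \eqref{eqn:20} on $D$ gives $Xk+f_3V^\flat(X)=0$ ($k=f_1-f_3$), and together with $d\eta(X,\xi)=-V^\flat(X)$ this shows that the $2$-form $-dk\wedge\eta+f_3d\eta$ occurring in \eqref{eqn:600} vanishes identically.

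Next I would feed $f_2=0$ and the vanishing of that $2$-form into \eqref{eqn:600}. Taking all four slots $X,Y,Z,W$ in $D$ kills every term except $\mathfrak S_{X,Y,Z}(Xf_1)\{\la Z,W\ra Y-\la Y,W\ra Z\}=0$; since $\dim D=2n\geq4$, choosing $X,Y,Z$ orthonormal and $W=X$ forces $Xf_1=0$ for all $X\in D$, i.e. $df_1=(\xi f_1)\eta$. Subtracting this from \eqref{eqn:20} immediately yields (a), namely $df_3=f_3V^\flat+(\xi f_3)\eta$.

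For (b) and (c) I would go back to \eqref{eqn:600} with $f_2=0$ and now put $Y=\xi$, $X,Z,W\in D$. Expanding the three-term cyclic sum, using $df_1=(\xi f_1)\eta$, $\nabla_X\xi=SX$ and $\nabla_\xi\xi=V\in D$, one is left with an identity of the form
\[
f_3\bigl(\la SX,W\ra Z+\la X,W\ra SZ-\la SZ,W\ra X-\la Z,W\ra SX\bigr)+(\xi f_1)\bigl(\la X,W\ra Z-\la Z,W\ra X\bigr)=0
\]
for all $X,Z,W\in D$. Putting $W=X$ with $X\perp Z$, $|X|=1$, and pairing with $U\in D$ orthogonal to both $X$ and $Z$ isolates $f_3\la SZ,U\ra=0$; since $f_3\neq0$ and $\dim D\geq4$ this forces $SZ\in\vspan\{X,Z\}$ for every unit $X\perp Z$ in $D$, hence $SZ$ is a multiple of $Z$, and therefore $S=\mu\,\mathbb I_D$ for a single function $\mu$. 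Computing $\delta\eta=-\diver\xi=-\trace S=-2n\mu$ identifies $\mu$ with $\beta$, which gives (b): $\nabla\xi=\beta(\mathbb I_{TM}-\xi\otimes\eta)+V\otimes\eta$. Substituting $S=\beta\,\mathbb I_D$ back into the displayed identity yields $(2\beta f_3+\xi f_1)\{\la X,W\ra Z-\la Z,W\ra X\}=0$, so $\xi f_1=-2\beta f_3$, and (c) follows from $df_1=(\xi f_1)\eta$.

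The routine-but-delicate part, and the place I would be most careful, is the bookkeeping of the cyclic sum in \eqref{eqn:600} when one slot is $\xi$: there are six curvature-like blocks, and after setting $f_2=0$ one must correctly track which terms survive under $\overline\xi=0$, $\eta(\xi)=1$, and the various $\eta$-contractions, as well as verify that every appearance of the $2$-form $-dk\wedge\eta+f_3d\eta$ really does drop out (on $D\times D$ it vanishes, on $D\times\{\xi\}$ it vanishes by \eqref{eqn:20}). Everything else — the linear-algebra steps forcing $df_1|_D=0$ and $S=\mu\,\mathbb I_D$ — is standard, using only $\dim D\geq4$, i.e. $n\geq2$.
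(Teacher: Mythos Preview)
Your proof is correct and follows essentially the same route as the paper: use \eqref{eqn:40} with $f_2=0$ to kill $d\eta$ on $D$, deduce that $-dk\wedge\eta+f_3d\eta=0$, specialize \eqref{eqn:600} to $D$ to get $df_1=(\xi f_1)\eta$ and hence (a), then set one argument equal to $\xi$ to obtain (b) and (c). The only difference is in that last step: the paper contracts the resulting identity over an orthonormal frame of $D$ to obtain $(2n-2)f_3\nabla_Y\xi+\{(2n-1)\xi f_1-f_3\delta\eta\}Y=0$ directly, whereas you argue pointwise that $S$ must be scalar; both arguments are equivalent and use only $n\geq2$.
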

\begin{proof}
Under the hypothesis,  (\ref{eqn:40}) gives $d\eta=\eta\wedge V^\flat$.  This  fact and  (\ref{eqn:20}) imply that   $-dk\wedge\eta+f_3d\eta=0$.
Fixed a unit vector $X\in D$ and select $Y$, $Z\in D$  such that  $X$, $Y$, $Z$ are orthonormal. Then  (\ref{eqn:600})  gives
$(Xf_1)Z-(Zf_1)X=0$ and hence 
\begin{align}\label{eqn:df1}
df_1=(\xi f_1)\eta.
\end{align}
By substituting this  into (\ref{eqn:20}), we get $df_3=f_3 V^\flat+(\xi f_3)\eta$. Further, (\ref{eqn:600}) reduces to
\begin{align*}
&(\xi f_1)\mathfrak S_{X,Y,Z}\eta(X)\{\la\bar Z,W\ra \bar Y-\la\bar Y,W\ra\bar Z\}
+f_3\mathfrak S_{X,Y,W}\la\nabla_X\xi,W\ra\{\eta(Y)\bar Z-\eta(Z)\bar Y\}		\nonumber	\\
&+f_3\mathfrak S_{X,Y,Z}\la\bar X,W\ra\{\eta(Y)\nabla_Z\xi-\eta(Z)\nabla_Y\xi\}=0
\end{align*}
for any $X$, $Y$, $Z$, $W\in TM$.
If we put $X=\xi$ and $Y$, $Z$, $W\in   D$ in this equation,  then
\begin{align}\label{eqn:340}
(\xi f_1)\{\la Z,W\ra  Y-\la Y,W\ra  Z\ra\}
			+f_3\{\la\nabla_Z\xi,W\ra  Y-\la\nabla_Y\xi,W\ra  Z	\nonumber\\
-\la Y,W\ra \nabla_Z\xi +\la Z,W\ra \nabla_Y\xi\}=0.
\end{align}
By contracting $Z$ and $W$ over a local orthonormal frame on $D$,   we get 
\begin{align}
(2n-2)f_3\nabla_Y\xi+\{(2n-1)\xi f_1-f_3\delta\eta\}Y=0, \quad Y\in   D.
\end{align}
Since $f_3\neq0$, the above equation deduces that  
$\nabla\xi=\beta(\mathbb I_{TM}-  \xi\otimes \eta)+V \otimes\eta$  and $\xi f_1+2\beta f_3=0$, 
where $\beta=-(1/2n)\delta\eta$.
This, together with (\ref{eqn:df1}), give Statement (c).
\end{proof}

Now we choose unit vectors $Y,Z\in D$ such that $Z\perp Y,\phi Y$. 
By putting  $X=\phi Y$ and $W=Z$ in (\ref{eqn:600}), we obtain
\begin{align*}
0=&(\phi Yf_1)Y-(Yf_1)\phi Y+2(Z f_2)\phi Z\\
		&+f_2\{3\la(\nabla_{\phi Y}\phi)\phi Y, \phi Z\ra\phi Z+3\la(\nabla_{Y}\phi) Y,\phi Z\ra\phi Z\\
		&+\la(\nabla_{Z}\phi)Z, Y\ra Y+\la(\nabla_{Z}\phi)Z, \phi Y\ra\phi Y
					+2(\nabla_{Z}\phi)Z\}\quad  (\mod \xi)  
\end{align*}
where we have used the identity
\begin{align}\label{eqn:iden}
(\nabla_X\phi)\phi+\phi(\nabla_X\phi)=(\nabla_X\xi)\otimes\eta+\xi\otimes(\nabla_X\eta), \quad X\in TM.
\end{align}
We can further deduce that  
\begin{align*}
2Z f_2=&-3f_2\{\la(\nabla_{\phi Y}\phi)\phi Y, \phi Z\ra+\la(\nabla_{Y}\phi) Y,\phi Z\ra\}  \\
Yf_1=&3f_2\la(\nabla_{Z}\phi)Z, \phi Y\ra   \\
2f_2(\nabla_{Z}\phi)Z
		=&-\{\phi Yf_1+f_2\la(\nabla_{Z}\phi)Z, Y\ra\}Y	\\
			&+\{Yf_1-f_2\la(\nabla_{Z}\phi)Z, \phi Y\ra\}\phi Y\quad  (\mod \xi)  		
\end{align*}
for any unit vectors $Y,Z\in D$ with $Z\perp Y,\phi Y$.

We note that these equations also hold if we switch $Y$ and $Z$. Hence we obtain
\begin{align}
Z f_1=&3f_2\{\la(\nabla_{Y}\phi)Y, \phi Z\ra=-Zf_2  \label{eqn:741}\\
3f_2(\nabla_{Z}\phi)Z
		=&-(\phi Yf_1)Y+(Yf_1)\phi Y\quad (\mod \xi) \nonumber	\\
		=&(\phi Yf_2)Y-(Yf_2)\phi Y\quad  (\mod \xi)  		 \label{eqn:742}
\end{align}
for any unit vectors $Y,Z\in D$ with $Z\perp Y,\phi Y$.

\section{Riemannian manifolds of quasi-constant sectional curvatures and canal hypersurfaces}
We shall first review some results in (space-like) canal hypersurfaces in Euclidean and Minkowski spaces. 
The main references of this section are \cite{ganchev,ganchev2}.

A canal hypersurface $M$ in the Euclidean space $\mathbb R^{m+1}$ is the envelope of a one-parameter family of hyperspheres 
$\{S^{m}(s)$, $s\in I\subset \mathbb R\}$, given by the following conditions
\begin{align*}
\la \varphi-\gamma(s),\varphi-\gamma(s)\ra&=r^2(s), \quad r(s)>0 \\
\la \varphi-\gamma(s),\dot\gamma(s)\ra      &=-r(s)\dot r(s)
\end{align*}
where $\gamma(s)$ and $r(s)$ are the center and radius of the corresponding sphere $S^{m}(s)$ respectively, 
$\varphi=\varphi(s,u_1,\cdots,u_{m-1})$ is the position vector of  $M$, 
$\dot\gamma=\partial_s \gamma$ and $\dot r=\partial_s r$.
We suppose that the curve $\gamma$ is parametrized by a natural parameter $s$.

Similarly, a space-like canal hypersurface $M$ in the Minkowski space $\mathbb R^{m+1}_1$ is the envelope of a one-parameter family of space-like hyperspheres $\{S^{m}(s)$, $s\in I\subset \mathbb R\}$, 
given by the conditions:
\begin{align*}
\la \varphi-\gamma(s),\varphi-\gamma(s)\ra&=-r^2(s), \quad r(s)>0 \\
\la \varphi-\gamma(s),\dot\gamma(s)\ra      &=r(s)\dot r(s)
\end{align*}
where $\gamma(s)$ and $r(s)$ are the center and radius of the corresponding sphere $S^{m}(s)$ respectively, 
and $\varphi=\varphi(s,u_1,\cdots,u_{m-1})$ is the position vector of  $M$. 

The space-like canal hypersurface $M$ in $\mathbb R^{m+1}_1$  is said to be of elliptic, hyperbolic or parabolic type if the curve $\gamma$ is time-like, space-like or light-like respectively.
We suppose that $s$ is a natural parameter of the curve $\gamma$ for a space-like canal hypersurface of elliptic or hyperbolic type.

In the following, we will use a unified notation. 
Let $M^{m+1}(c)$ denote the Euclidean space $\mathbb R^{m+1}$ (resp. Minkowski space $\mathbb R^{m+1}_1$) for $c=1$ (resp. $c=-1$) .

We will use the notation $M(c,\varepsilon)$ to denote a (space-like) canal hypersurface in $M^{m+1}(c)$, 
where $\varepsilon=\la \dot\gamma,\dot\gamma\ra$, that is, 
\begin{itemize}
\item for $c=1$:  we put $\varepsilon=1$; and 
\item for $c=-1$:  we put 
$\varepsilon=\left\{\begin{array}{rl}
  -1, &  \text{when  $M(c,\varepsilon)$ is of elliptic type}\\
   1, &  \text{when  $M(c,\varepsilon)$ is of hyperbolic type}\\
		0, &  \text{when  $M(c,\varepsilon)$ is of parabolic type}.
	\end{array}\right.
	$
\end{itemize}
It follows that  $M(c,\varepsilon)$ satisfies the following conditions
\begin{align}
\la \varphi-\gamma(s),\varphi-\gamma(s)\ra&=cr^2(s), \quad r(s)>0  \label{eqn:canal1}\\
\la \varphi-\gamma(s),\dot\gamma(s)\ra      &=-cr(s)\dot r(s). \label{eqn:canal2}
\end{align}
We consider the unit normal vector field 
\[
N=-\frac{\varphi-\gamma}r
\]
to $M(c,\varepsilon)$. Then we can see that 
\[
\la N,N\ra=c, \quad \la N,\dot\gamma\ra=c\dot r.
\]
Next  define a unit tangent vector field $\xi$ by
\[
\xi=\frac{\dot \gamma-\dot r N}{\sqrt{\varepsilon-c\dot r^2}}.
\]
It follows that the distribution $D=\mathbb R\xi^\perp$ is spanned by $\{\partial\varphi/\partial_{u_1},\cdots,\partial\varphi/\partial_{u_{m-1}}\}$.
Let $\bar\nabla$ be the Levi-Civita connetion of $M^{m+1}(c)$. Then 
\begin{align}\label{eqn:canal_normal}
\bar\nabla_XN&=-\lambda X;\quad  \bar\nabla_{X}\xi=\beta X; \quad  \lambda=\frac1r,   
																										\quad \beta=\frac{\dot r}{r\sqrt{\varepsilon-c\dot r^2}} \\
\bar\nabla_\xi N&=-\nu\xi;  \quad \nu=\frac1r-\frac{\sqrt{\varepsilon-c\dot r^2}}{r\la\xi,\dot\varphi\ra}. \nonumber  
\end{align}
It follows that the shape operator  $A$ of $M(c,\varepsilon)$ is given by
\[
A\xi=\nu\xi, \quad AX=\lambda X
\]
for any vector $X$ tangent to $D$.
We note that 
\[
c\lambda^2+\beta^2=\frac{c\varepsilon}{r^2(\varepsilon-c\dot r^2)}
\left\{\begin{array}{rl}
>0, & \text{for } c=-1, \varepsilon=-1; \text{or } c=1 \\
<0, & \text{for } c=-1, \varepsilon= 1 \\
=0, & \text{for } c=-1, \varepsilon= 0 \\
\end{array}\right..
\]

We recall that a Riemannian manifold $M$ is said to have quasi-constant sectional curvature if there exists on $M$ a unit vector field $\xi$ such that 
its curvature tensor  $R=f_1R_1+f_3R_3$, for some function $f_1$ and $f_3$ on $M$.

\begin{theo}[\cite{ganchev2}]\label{thm:canal}
Let $M(c,\varepsilon)$ be a (space-like) canal hypersurface in $M^{m+1}(c)$, $m\geq 4$. Then $M(c,\varepsilon)$ is a Riemannian manifold of quasi-constant sectional curvature
with $f_1=c\lambda^2$ and $f_3=c\lambda(\lambda-\nu)$.
In particular, we have
\begin{enumerate}
\item[(a)]  $f_1>0$ for a canal hypersurface in $\mathbb R^{m+1}$;  
\item[(b)]  $f_1<0$ and $f_1+\beta^2>0$ for a space-like canal hypersurface in $\mathbb R^{m+1}_1$ of elliptic type;  
\item[(c)] $f_1+\beta^2<0$ for a space-like canal hypersurface in $\mathbb R^{m+1}_1$ of hyperbolic type;  
\item[(d)] $f_1+\beta^2=0$ for a space-like canal hypersurface in $\mathbb R^{m+1}_1$ of parabolic type.  
\end{enumerate}
Conversely, a Riemannian manifold of quasi-constant sectional curvature with $f_1\neq0$ and $f_3\neq0$ is an open part of one of the above spaces.
\end{theo}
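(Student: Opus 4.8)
The plan is to split the equivalence into its two implications. The direct statement---that a (space-like) canal hypersurface has quasi-constant sectional curvature with the asserted $f_1$ and $f_3$, together with the sign conclusions (a)--(d)---is a short computation from the Gauss equation plus the normal-data recorded above; the converse is the genuine classification, which I would take over from \cite{ganchev,ganchev2} along the lines sketched below.

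For the direct part: since $M^{m+1}(c)$ is flat, the Gauss equation for the hypersurface $M(c,\varepsilon)$ with unit normal $N$ (so that $\la N,N\ra=c$) and shape operator $A$ (so that $\bar\nabla_XN=-AX$) reads
\[
R(X,Y)Z=c\bigl(\la AY,Z\ra AX-\la AX,Z\ra AY\bigr).
\]
By (\ref{eqn:canal_normal}) we have $A=\lambda(\mathbb I_{TM}-\xi\otimes\eta)+\nu\,\xi\otimes\eta$; inserting this into the right-hand side and expanding, the terms quadratic in $(\nu-\lambda)$ cancel and what remains is exactly $\lambda^2R_1(X,Y)Z+\lambda(\lambda-\nu)R_3(X,Y)Z$. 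Hence $R=c\lambda^2R_1+c\lambda(\lambda-\nu)R_3$, i.e.\ $M(c,\varepsilon)$ is of quasi-constant sectional curvature with $f_1=c\lambda^2$ and $f_3=c\lambda(\lambda-\nu)$. Statements (a)--(d) then follow from $f_1=c\lambda^2$ together with the identity $c\lambda^2+\beta^2=c\varepsilon/\bigl(r^2(\varepsilon-c\dot r^2)\bigr)$ noted just above: for $c=1$ one reads off $f_1=\lambda^2>0$; for $c=-1$ one reads off $f_1=-\lambda^2<0$, and since $\la\xi,\xi\ra=1$ forces $\varepsilon-c\dot r^2>0$, the sign of $f_1+\beta^2=c\lambda^2+\beta^2$ is that of $c\varepsilon$, which is positive, negative, or zero according as $\varepsilon=-1$ (elliptic), $\varepsilon=1$ (hyperbolic), or $\varepsilon=0$ (parabolic).

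For the converse, suppose $(M,\xi)$ has quasi-constant sectional curvature with $f_1\neq0$ and $f_3\neq0$. The idea is to reconstruct the ambient flat space and the immersion by hand: set $c=\operatorname{sign}(f_1)$, $\lambda=\sqrt{|f_1|}$, $\nu=\lambda-f_3/(c\lambda)$, and let $A=\lambda(\mathbb I_{TM}-\xi\otimes\eta)+\nu\,\xi\otimes\eta$. Reversing the computation above shows that $A$ satisfies the Gauss equation relative to $M^{m+1}(c)$; one must then verify the Codazzi equation $(\nabla_XA)Y=(\nabla_YA)X$, which has to be extracted from the (contracted) second Bianchi identity applied to $R=f_1R_1+f_3R_3$, since that is what constrains $\grad f_1$, $\grad f_3$ and $\nabla\xi$. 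The fundamental theorem of semi-Riemannian hypersurfaces then realizes $M$ locally as a hypersurface of $M^{m+1}(c)$ with shape operator $A$; because $A$ is quasi-umbilical---eigenvalue $\nu$ along $\xi$ and eigenvalue $\lambda$ of multiplicity $m-1\geq 3$ on $D$, with $\nu\neq\lambda$ since $f_3\neq0$---this hypersurface is, classically, the envelope of a one-parameter family of hyperspheres, i.e.\ a canal hypersurface of exactly the type singled out by whichever of the regimes (a)--(d) holds. As this is precisely Ganchev and Mihova's theorem, I would cite \cite{ganchev2} (see also \cite{ganchev}) for it, with the above as the guiding argument. I expect the Codazzi step to be the main obstacle: $(\nabla_XA)Y=(\nabla_YA)X$ is not part of the algebraic hypothesis and must be squeezed out of the differentiated curvature relation, exactly the delicate point both here and in the original treatment.
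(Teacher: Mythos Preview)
The paper does not supply its own proof of this theorem: it is quoted verbatim from \cite{ganchev2} (the bracket in the theorem heading signals exactly this), and the surrounding Section~5 merely records the normal-bundle data \eqref{eqn:canal_normal} and the identity $c\lambda^2+\beta^2=c\varepsilon/\bigl(r^2(\varepsilon-c\dot r^2)\bigr)$ before stating the theorem. So there is nothing to compare your argument against inside the paper itself.

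That said, your direct computation is correct and is precisely how the result is obtained in the cited sources: with $A=\lambda(\mathbb I_{TM}-\xi\otimes\eta)+\nu\,\xi\otimes\eta$, expanding $c(\la AY,Z\ra AX-\la AX,Z\ra AY)$ yields $c\lambda^2R_1+c\lambda(\lambda-\nu)R_3$ (the $(\nu-\lambda)^2$ terms do cancel, as you claim), and your reading of the signs in (a)--(d) from the displayed identity for $c\lambda^2+\beta^2$ is exactly right. For the converse you correctly identify the strategy of \cite{ganchev,ganchev2}: manufacture the candidate second fundamental form, verify Gauss and Codazzi, invoke the fundamental theorem, and then recognise a quasi-umbilical hypersurface with a distinguished simple eigenvalue as a canal hypersurface. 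Your instinct that Codazzi is the crux is also right; in the language of this paper, the needed differential constraints on $f_1$, $f_3$ and $\nabla\xi$ are exactly the contents of Lemma~\ref{lem:f_3_neq_0-b}, which the paper derives from the second Bianchi identity, and those relations are what make $(\nabla_XA)Y=(\nabla_YA)X$ go through in \cite{ganchev,ganchev2}. In short, your write-up does what the paper does---defer the converse to \cite{ganchev2}---while adding a correct and useful sketch of the forward direction that the paper omits.
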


Next, we give an example of Riemannian manifolds  of quasi-constant sectional curvature with $f_1=0$ and $f_3\neq0$.

\begin{theo}\label{thm:TP}
Let $M={_a}\mathbb R\times\mathbb R^{m-1}$ be a twisted prodcut space.  
Then $M$ satisfies 
\begin{align}\label{eqn:f1=f2=0}
R=f_3R_3
\end{align}
where $f_3\neq0$ is a function on $M$
if and only if 
		\[
	a(t,x)=\sum^{m-1}_{i=1}(C(t)x_i^2+D_i(t)x_i)+E(t); \quad f_3=\frac{2C}a
	\]
	for some functions  $C\neq0$, $D_1,\cdots, D_{m-1}$ and  $E$ on $\mathbb R$  such that $a>0$. 
\end{theo}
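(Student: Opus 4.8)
The plan is to work in the natural frame adapted to the twisted product and to compute the curvature tensor of $M={_a}\mathbb{R}\times\mathbb{R}^{m-1}$ directly, since the second (flat) factor has trivial intrinsic curvature. Write $x=(x_1,\dots,x_{m-1})$ for the Euclidean coordinates and $\partial_t,\partial_1,\dots,\partial_{m-1}$ for the coordinate fields. The metric is $\langle\,,\,\rangle = a^2\,dt^2 + \sum_i dx_i^2$, so $\xi = a^{-1}\partial_t$ is the unit vector field and $\eta = a\,dt$. The condition $R=f_3R_3$ says precisely that $M$ is of quasi constant sectional curvature with $f_1=0$; equivalently, every plane section containing $\xi$ has curvature $-f_3$ and every plane section inside $D=\ker\eta$ has zero curvature, with appropriate mixed components. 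So first I would record the Christoffel symbols of this metric: only those involving $\partial_t$ and the gradient of $\log a$ are nonzero, and they are elementary to write down (e.g. $\nabla_{\partial_i}\partial_t = (\partial_i\log a)\partial_t$, $\nabla_{\partial_t}\partial_t = -a(\partial_ja)\partial_j/... $ schematically, and $\nabla_{\partial_i}\partial_j$ picks up a $\partial_t$ term from $a$). Then I would assemble the components $R(\partial_i,\partial_j)\partial_k$, $R(\partial_t,\partial_i)\partial_t$, $R(\partial_t,\partial_i)\partial_j$ of the curvature tensor.

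Having these components, the equation $R=f_3R_3$ decomposes into three families of scalar PDEs for $a$. The $D$-$D$ sectional curvatures must vanish: a short computation shows the sectional curvature of the plane $\{\partial_i,\partial_j\}$ is $-a^{-2}(\partial_i\log a)(\partial_j\log a)\cdot(\text{something})$ plus terms; demanding these vanish for all $i\neq j$ forces the spatial Hessian of $a$ (or of $\log a$) to be diagonal, which is what produces the sum-of-separated-squares form. More precisely, the vanishing of the off-diagonal pieces gives $\partial_i\partial_j a = 0$ for $i\neq j$, so $a$ is a sum $\sum_i a_i(t,x_i) + (\text{function of }t)$; and the requirement that all the diagonal $D$-$D$ curvatures be \emph{equal} (in fact the correct combination forces them to combine into a single $f_3R_3$) forces $\partial_i^2 a$ to be independent of $i$ and of $x_i$, i.e. $\partial_i^2 a = 2C(t)$ for a common function $C(t)$. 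Integrating twice in $x_i$ gives $a_i = C(t)x_i^2 + D_i(t)x_i$, hence $a(t,x)=\sum_i(C(t)x_i^2+D_i(t)x_i)+E(t)$. Finally the $\xi$-sectional-curvature components identify $f_3$: $R(\xi,\partial_i)\partial_i$ turns out to be $-a^{-1}\partial_i^2 a\cdot(\cdots)$ which, after the normalization, evaluates $f_3 = 2C/a$, and one checks this is consistent with the mixed components $R(\partial_t,\partial_i)\partial_j$ vanishing for $i\neq j$ (automatic once $\partial_i\partial_j a=0$). The conditions $C\neq0$ and $a>0$ are exactly $f_3\neq0$ and positivity of the metric coefficient.

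For the converse, I would simply substitute $a(t,x)=\sum_i(C x_i^2 + D_i x_i)+E$ back into the curvature formulas derived above and verify that all $D$-$D$ and mixed components vanish while the $\xi$-sections give curvature $-2C/a$, so that $R=f_3R_3$ with $f_3=2C/a$; this direction is a direct (if slightly tedious) check with no conceptual content. It is worth noting that one could also phrase the forward direction more invariantly: $R=f_3R_3$ forces $D=\ker\eta$ to be flat as an intrinsic metric (it is $\mathbb{R}^{m-1}$ with the twisted metric $a^{-2}\cdot(\text{scaled Euclidean})$... actually the induced metric on leaves $t=\mathrm{const}$ is flat Euclidean, but the leaves $x=\mathrm{const}$ and the warping interact), and then the second Bianchi identity / the structure of twisted products constrains $a$; but the coordinate computation is the cleanest route here since $m\geq4$ is not even needed beyond having at least two transverse spatial directions, and in fact $m-1\geq 2$ (i.e. $m\geq 3$) suffices to run the off-diagonal argument.

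The main obstacle I anticipate is bookkeeping rather than ideas: organizing the curvature components of a genuine \emph{twisted} (not merely warped) product — where $a$ depends on \emph{all} variables — so that the $D$-$D$, $D$-$\xi$, and $\xi$-$\xi$ blocks are cleanly separated, and making sure the ``all diagonal curvatures equal'' conclusion is extracted correctly (the off-diagonal vanishing gives $\partial_i\partial_j a=0$ immediately, but pinning down $\partial_i^2 a = \partial_j^2 a$ and its independence of $x_i$ requires comparing the $R(\xi,\partial_i)\partial_i$ components, which also carry $t$-derivatives of $a$ that must be shown not to obstruct the conclusion). Once the Christoffel symbols are laid out correctly, each of these is a routine verification, so I would present the Christoffel symbols and the three curvature blocks explicitly, then deduce the PDEs $\partial_i\partial_j a=0$ and $\partial_i^2 a = 2C(t)$, integrate, and read off $f_3$.
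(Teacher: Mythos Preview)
Your overall plan (compute the curvature of the twisted metric, read off PDEs for $a$, solve) matches the paper's, but you have misidentified which curvature block carries the constraints. For $\la\,,\,\ra=a^2dt^2+\sum_i dx_i^2$ one has $\nabla_{\partial_i}\partial_j=0$ for all $i,j$ (the leaves $t=\text{const}$ are totally geodesic and flat), so $R(\partial_i,\partial_j)\partial_k\equiv0$ automatically; the $D$--$D$ block gives \emph{no} information about $a$ and cannot be the source of $\partial_i\partial_j a=0$. All the content lies in the mixed block: a short computation (or the twisted-product formula the paper cites from \cite{garcia}) gives $R(\partial_i,\xi)\partial_j=\dfrac{\partial_i\partial_j a}{a}\,\xi$, and matching this against $f_3R_3(\partial_i,\xi,\partial_j)=f_3\delta_{ij}\,\xi$ yields simultaneously the off-diagonal constraint $\partial_i\partial_j a=0$ ($i\neq j$) and the diagonal one $\partial_i^2 a=f_3a$. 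No $t$-derivatives of $a$ appear here, so your worry about them ``obstructing'' is unnecessary.

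Once the PDEs are correctly attributed, your integration is exactly the paper's: $\partial_i\partial_j a=0$ separates $a=\sum_j\psi_j(t,x_j)$, and then $\partial_i^2\psi_i(t,x_i)=f_3a=\partial_j^2\psi_j(t,x_j)$ for all $i,j$ forces $\partial_i^2\psi_i$ to depend only on $t$; setting it equal to $2C(t)$ and integrating twice gives the stated form with $f_3=2C/a$. The converse is, as you say, a direct substitution. So the only repair needed is to move the extraction of the PDEs from the (vacuous) $D$--$D$ block to the $R(\partial_i,\xi)\partial_j$ block.
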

\begin{proof}
Suppose $M={_a}\mathbb R\times\mathbb R^{m-1}$. 
By using the formulas in \cite{garcia}, we see that 
\begin{align*}
R(X,\xi)Z		=&\{(XZ-\nabla_XZ)\log a+(X\log a)(Z\log a)\}\xi 
\end{align*}
for any $X$ and $Z$ orthogonal to $\xi$. 
Hence, we can verify that  (\ref{eqn:f1=f2=0}) is equivalent to the  condition
\begin{align}\label{eqn:03b}
f_3\la X,Z\ra		= &(XZ-\nabla_XZ)\log a+(X\log a)(Z\log a) 
\end{align}
for any $X$ and $Z$ orthogonal to $\xi$.

\medskip
\textbf{Sufficiency.}
Suppose the condition (\ref{eqn:f1=f2=0}) is satisfied. 
Let ($x_1,\cdots,x_{m-1})$ be the standard coordinates of $\mathbb R^{m-1}$ and $t$ be the standard coordinate  of $\mathbb R$. 
Then (\ref{eqn:03b})  is represented by
\begin{align}
\partial_i{\partial_j a}&=0, \quad (i\neq j) \label{eqn:ij_a}\\
\partial_i{\partial_i a}&=f_3a. \label{eqn:ii_a}
\end{align}

Solving  (\ref{eqn:ij_a}),  gives   $a=\sum^{m-1}_{j=1}\psi_j(t,x_j)$, 
where each $\psi_j$ depends on $t$ and $x_j$ only. 
By substituting  this into  (\ref{eqn:ii_a}), we obtain 
\begin{align*}
\partial_i\partial_i\psi_i(t,x_i)=f_3a.
\end{align*}
This implies that each $\partial_i\partial_i\psi_i$ depends only on $t$. We put 
$\partial_i\partial_i\psi_i(t,x_i)=2C(t)$. It follows that 
$\psi_i(t,x_i)=C(t)x_i^2+D_i(t)x_i+E_i(t)$, and so
\begin{align}\label{eqn:a0}
a(t,x)=\sum^{m-1}_{i=1}(C(t)x_i^2+D_i(t)x_i)+E(t), \quad  E(t)=\sum^{m-1}_{i=1}E_i(t), \quad 
f_3=\frac{2C}a.
\end{align}

\medskip
\textbf{Necessity.}
It is directly from the uniqueness of solutions for the PDEs obtained from (\ref{eqn:03b}).
This completes the proof.
\end{proof}

With  the same procedure as in the proof of  Lemma~\ref{lem:f_3_neq_0}, 
except a slight change in the argument while deriving  (\ref{eqn:20}), one may verify that  Lemma~\ref{lem:f_3_neq_0}
holds for Riemannian manifolds of quasi constant sectional curvature as well. 
Accordingly, we state the following lemma without proof.

\begin{lem}\label{lem:f_3_neq_0-b}
Let $M$ be a Riemannian manifold of quasi constant sectional curvature of dimension $m\geq 4$. 
Suppose $f_3\neq0$. Then 
\begin{itemize}
\item[(a)] $df_3=f_3V^\flat+(\xi f_3)\eta$
\item[(b)] $\nabla\xi=\beta(\mathbb I_{TM}-  \xi\otimes \eta)+V\otimes\eta$
\item[(c)] $df_1=-2\beta f_3\eta$
 \end{itemize}
where $\beta=-(1/2n)\delta\eta$.
\end{lem}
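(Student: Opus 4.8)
The plan is to follow the remark that Lemma~\ref{lem:f_3_neq_0-b} is obtained by replaying the proof of Lemma~\ref{lem:f_3_neq_0} verbatim with $f_2=0$ built in from the start, so I would first isolate exactly which steps of the earlier derivation used the generalized Sasakian space form structure (the tensor $R_2$, the $\phi$-identities, equation~(\ref{eqn:10}), etc.) versus which used only $R=f_1R_1+f_3R_3$. The key observation is that once $f_2=0$ the second Bianchi identity computation collapses: the terms $A_2$ and $A_4$ disappear, and the analogue of (\ref{eqn:600}) becomes an identity involving only $f_1$, $f_3$, $\eta$, $\nabla\xi$, and $d\eta$. So the first step is to write down the quasi-constant-curvature analogue of (\ref{eqn:pre-600})--(\ref{eqn:600}) directly, using $R=f_1R_1+f_3R_3$ and the same regrouping $A_1+A_3+A_6$ that produced $-dk\wedge\eta+f_3 d\eta$ with $k=f_1-f_3$.

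Next I would redo the specialization steps. Putting $W=\xi$ and then $Z=\xi$, $X,Y\in D$ in this new identity gives, in place of (\ref{eqn:10})--(\ref{eqn:20}), the single relation
\[
dk+f_3 V^\flat=(\xi k)\eta,
\]
which is exactly the earlier (\ref{eqn:20}); there is no $f_2 V=0$ equation to worry about, which is the \textquotedblleft slight change in the argument\textquotedblright\ the remark alludes to (one no longer needs to choose $X,Y$ to kill an $f_2$-term, so the conclusion drops out more directly). Then the analogue of (\ref{eqn:40}) forces $d\eta=\eta\wedge V^\flat$ — here one uses $m\geq 4$ so that the contraction over $D$ is nondegenerate — whence $-dk\wedge\eta+f_3 d\eta=0$. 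Choosing an orthonormal triple $X,Y,Z\in D$ in the reduced identity yields $(Xf_1)Z-(Zf_1)X=0$, hence $df_1=(\xi f_1)\eta$, which is statement~(c) up to identifying the coefficient; substituting back into (\ref{eqn:20}) gives statement~(a), $df_3=f_3 V^\flat+(\xi f_3)\eta$.

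Finally, to get statement~(b), I would put $X=\xi$, $Y,Z,W\in D$ in the reduced Bianchi identity to obtain the exact analogue of (\ref{eqn:340}), contract $Z$ and $W$ over an orthonormal frame of $D$, and use $f_3\neq 0$ together with $m\geq 4$ (so $m-2\geq 2\neq 0$) to solve for $\nabla_Y\xi$ on $D$; combined with $\nabla_\xi\xi=V$ this gives $\nabla\xi=\beta(\mathbb I_{TM}-\xi\otimes\eta)+V\otimes\eta$ and simultaneously the relation $\xi f_1+2\beta f_3=0$ with $\beta=-(1/2n)\delta\eta$ — completing (c). The main obstacle is purely bookkeeping: verifying that every place in the proof of Lemma~\ref{lem:f_3_neq_0} that invoked $n\geq 2$ for a $(2n+1)$-dimensional contact manifold still has enough room in the $m\geq 4$ Riemannian setting, and confirming that no hidden use of the $\phi$-structure (e.g. via (\ref{eqn:iden}) or the $R_2$-terms) sneaks into the three statements being proved. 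Since those statements never mention $\phi$, this check should go through, and I would simply remark that the argument is identical and omit the routine computation, exactly as the paper does.
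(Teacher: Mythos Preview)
Your proposal is correct and matches the paper's own treatment: the paper does not give a separate proof but simply states that the argument of Lemma~\ref{lem:f_3_neq_0} goes through verbatim, with only a slight change at the step deriving~(\ref{eqn:20}). You have correctly identified both the overall strategy (drop the $A_2$, $A_4$ terms and replay the Bianchi computation with $R=f_1R_1+f_3R_3$) and the specific ``slight change'' (the relation $dk+f_3V^\flat=(\xi k)\eta$ falls out directly, without needing the auxiliary $f_2V=0$ step), so there is nothing to add.
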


\begin{proof}[Proof of Theorem~\ref{thm:quasi}]
We only need to proof the Sufficiency part.
If $f_3=0$, then $M$ is of constant sectional curvature. Next, we obtain (b)--(e) when $f_1\neq0$ and $f_3\neq0$ by virtue of 
Theorem~\ref{thm:canal}.

Finally, we consider $f_1=0$ and $f_3\neq0$.
 By Lemma~\ref{lem:f_3_neq_0-b}(b)--(c), we see that $\nabla_X\xi=0$ for any $X\perp \xi$. 
Hence $D$ is  autoparallel. 
By a result in \cite{reckziegel-schaaf},  $M$ is locally a twisted product $_a\mathbb R\times P$. 
Since $P$ is totally geodesic in $M$ and $f_1=0$, $P$ is flat and so  by Theorem~\ref{thm:TP}, we obtain  (f).
\end{proof}

\section{Proof of Theorem~\ref{thm:gssf}}
First, if $f_2=f_3=0$, then $M$ is of constant sectional curvature.
Next we consider these cases:
\begin{itemize}
\item $f_1\neq0$, $f_2=0$ and $f_3\neq0$;
\item $f_1=f_2=0$  and $f_3\neq0$;
\item $f_2\neq0$.
\end{itemize}

\medskip
\emph{Case (A)  $f_1\neq0$, $f_2=0$  and $f_3\neq0$.}

By Theorem~\ref{thm:canal}, we see that $M$ is an open part of a (space-like) canal hypersurface in $M^{2n+2}(c)$, listed in  (b)--(e).
We only need to investigate the dimension for canal hypersurfaces in $\mathbb R^{2n+2}$ and space-like canal hypersurfaces of elliptic type in $\mathbb R^{2n+2}_1$.

For a canal hypersurface $M$ in $\mathbb R^{2n+2}$.
It follows from (\ref{eqn:canal1})--(\ref{eqn:canal2}) that each leaf of $D$ lie on an
$2n$-dimensional Euclidean sphere $S^{2n}(s)$ in $\mathbb R^{2n+2}$ (see \cite[pp. 129--130]{ganchev} for detail).
The almost contact structure of $M$ induces an almost complex structure on $S^{2n}(s)$. 
Since only  a six-dimensional sphere and a  two-dimensional sphere admit an almost complex structure,
we conclude that $n=3$ and obtain   (b). 

Next, consider a space-like canal hypersurface of elliptic type in $\mathbb R^{2n+2}_1$.
By using (\ref{eqn:canal1})-(\ref{eqn:canal2}) again, we see that each leaf of $D$ lie on an
$2n$-dimensional Euclidean sphere $S^{2n}(s)$ in $\mathbb R^{2n+2}_1$.
With a similar argument, we conclude that $n=3$ and obtain   (c).

\medskip
\emph{Case (B)  $f_1=f_2=0$ and $f_3\neq0$.}

By Theorem~\ref{thm:quasi}, we see that  $M$ is locally a twisted product $_a\mathbb R\times P$,
where $P$ is an open part of $\mathbb R^{2n}$.
Since the almost contact metric structure on $M$ induces on each leaf of $D$ an almost Hermitian structure, $P$ is a flat almost Hermitian manifold and  we obtain  (f).

\medskip
\emph{Case (C)  $f_2\neq0$.}

By putting $X=E_j$ and $Z=\phi E_j$ in (\ref{eqn:30}), 
where $\{E_1,\cdots,E_{2n+1}\}$ is a local orthonormal frame on $TM$, and then summing up these equations over $j$,  we have
\begin{align}\label{eqn:120}
f_2\left\{(2n-1)\la\nabla_Y\xi,\phi W\ra-\la\nabla_W\xi,\phi Y\ra-\delta\eta\la\phi Y,W\ra\right\}&	\nonumber\\
+f_3\{d\eta(Y,\phi W)+\delta\Phi(\xi)\la Y,W\ra\}=0,&  \quad Y,W\in   D.
\end{align}
By first switching  $Y$ and $W$ in the above equation and then using the obtained equation and (\ref{eqn:120}), we obtain
\begin{align}\label{eqn:35}
(2n-2)f_2\{2n\la\nabla_Y\xi,\phi W\ra-\delta\eta\la\phi Y,W\ra\}
			+f_3\{2n\delta\Phi(\xi)\la Y,W\ra	&	\nonumber\\
+(2n-1)d\eta(Y,\phi W)-d\eta(\phi Y,W)\}&=0	\\
f_2\{2n\la\nabla_Y\xi,\phi W\ra-2n\la\nabla_W\xi,\phi Y\ra-2\delta\eta\la\phi Y,W\ra\}	&\nonumber \\
+f_3\{d\eta(Y,\phi W)+d\eta(\phi Y,W) \}&=0  \label{eqn:70}
\end{align}
for any $Y$,  $W\in   D$. 
Replacing $W$ by $\phi W$ in  (\ref{eqn:35}), we get
\begin{align*}
-(2n-2)f_2\{2n\la\nabla_Y\xi, W\ra+\delta\eta\la Y,W\ra\}
			-f_3\{2n\delta\Phi(\xi)\la \phi Y,W\ra		\nonumber\\
+(2n-1)d\eta(Y, W)+d\eta(\phi Y,\phi W)\}=0, \quad Y,W\in D.
\end{align*}
It follows from the symmetric part of this equation that 
\[
\la\nabla_Y\xi, W\ra+\la\nabla_W\xi,Y\ra=2\beta\la Y,W\ra, \quad \beta:=-\frac{\delta\eta}{2n}
\]
for any $Y$,  $W\in D$.
On the other hand, it follows from (\ref{eqn:10}) that $V=0$. Hence, we conclude that 
\[
\la\nabla_Y\xi, W\ra+\la\nabla_W\xi,Y\ra=2\beta\la\bar Y,W\ra, \quad Y,W\in TM.
\]

According to Theorem~\ref{theo:Nk-Sasakian}, we have either $\nabla\xi=\beta(\mathbb I_{TM}-  \xi\otimes \eta)$  or 
$\nabla\xi=-\alpha\psi$, where $\alpha\neq 0$ is a constant.
If   $\nabla\xi=\beta(\mathbb I_{TM}-  \xi\otimes \eta)$, then  by virtue of  Corollary~\ref{cor:pre-warped_prod_gSSF}, we see that $M$ is locally a warped product $\mathbb R\times_b P$
with $\ddot b=-kb $ and $\beta=\dot b/b$, where $P$ is a generalized complex space form with functions
$F_1=b^2f_1+\dot b^2$ and $F_2=b^2f_2$.
In particular, for $n\geq 3$,  since $F_2\neq0$, Theorem~\ref{thm:gcsf} tells us that $P$ is a non-flat complex space form.
With the  similar calculations as in the proofs of  
\cite[Proposition 3 and Theorem 4]{kenmotsu}, we see that  $M$ is $\beta$-Kenmotsu. This gives  (g).

Now we suppose that  $\nabla\xi=-\alpha\psi$, where $\alpha$ is a nonzero constant and $(\psi,\xi,\eta,\la,\ra)$ is an $\alpha$-Sasakian structure on $M$.
Note that in this case we have $\beta=-(1/2n)\delta\eta=0$.
Since each $\alpha$-Sasakian structure could be descended to a Sasakian structure after rescaling by a factor $\alpha^2$ on the Riemannian metric
and replacing $\xi$ with $\alpha^{-1}\xi$, without loss of generality, we may assume that $\alpha=1$.

If $\psi=\phi$, then we obtain (h). Hence, in the following we assume $\psi\neq\phi$.
Fixed $Y$, $Z\in D$, (\ref{eqn:60}) and (\ref{eqn:70}) give
\begin{align}
\{2f_2+(n-1)f_3\}\{\la \phi\psi Y,Z\ra-\la\psi\phi Y,Z\ra\}=0		\label{eqn:60-b} \\
\{nf_2+f_3\}\{\la \phi\psi Y,Z\ra-\la\psi\phi Y,Z\ra\}=0.  \label{eqn:70-b}
\end{align}
Now we consider two subcases:  $n\geq 3$ and $n=2$.

\medskip
\emph{Case (C-i) $n\geq 3$}.
 
If $n\geq 3$, then (\ref{eqn:60-b}) and (\ref{eqn:70-b}) imply that $\phi\psi=\psi\phi$.
It follows that (\ref{eqn:40}) and (\ref{eqn:35}) become
\begin{align*}
2\{f_2+(n-1)f_3\}\la\phi\psi Y,Z\ra+f_2\delta\Phi(\xi)\la Y,Z\ra&=0\\
2\{(n-1)f_2+f_3\}\la\phi\psi Y,Z\ra+f_2\delta\Phi(\xi)\la Y,Z\ra&=0.
\end{align*}
Since $f_2\neq0$, either $f_2+(n-1)f_3\neq0$ or $(n-1)f_2+f_3\neq0$. We can then deduce from these equations that 
$\phi=\psi$; a contradiction. Hence this case can not occur.

\medskip
\emph{Case (C-ii) $n=2$}.

We consider two subcases: $\phi\psi=\psi\phi$ and $\phi\psi\neq\psi\phi$.

\medskip
\emph{Case (C-ii-a) $\phi\psi=\psi\phi$}.

By (\ref{eqn:40}), we see that 
$
-2\{f_2+f_3\}\psi+f_2{\delta\Phi(\xi)}\phi =0. 
$
If $f_2\neq -f_3$, then this again gives $\phi=\psi$ and so we obtain $f_2+f_3=0$.

On the other hand, recall that the Ricci tensor $S$ for a generalized Sasakian space form $M$ is given by
\begin{align*}
S=\{2nf_1+3f_2-f_3\}\mathbb I_{TM}-\{3f_2+(2n-1)f_3\} \xi\otimes \eta.
\end{align*}
Since $n=2$, and $f_2+f_3=0$ in our case, $(\psi,\xi,\eta,\la,\ra)$ is a Sasakian Einstein  structure on $M$. 
This gives (i).

\medskip
\emph{Case (C-ii-a) $\phi\psi\neq \psi\phi$}.

It follows from   (\ref{eqn:50}) and (\ref{eqn:70-b})  that $2f_2+f_3=\delta\Phi(\xi)=0$.
With the help of these results, we can further obtain  $\phi\psi=-\psi\phi$ from  (\ref{eqn:40}). 

By using (\ref{eqn:741}) and the facts $2f_2+f_3=0$, $f_1-f_3=1$, we obtain $Zf_1=0$ for $Z\in D$.
Since $(M,\psi,\xi,\eta)$ is a Sasakian manifold, $f_1$ is a constant (and so are both $f_2$ and $f_3$). 
By using the fact $\dim D=4$,  (\ref{eqn:iden}) and (\ref{eqn:742}), we have
\[(\nabla_{X}\phi)Y\perp D, \quad X,Y\in D.
\]
It follows that   $M$ is locally $\psi$-symmetric (in the sense of \cite{blair-vanhecke}). 
Hence, the universal covering $\bar M$ of $M$ is a five-dimensional naturally reductive homogeneous space (cf. \cite{blair-vanhecke}) 
and so $\bar M$ is one of the spaces listed in 
\cite{kowalski}. 
Since $(\psi,\eta,\xi,\la,\ra)$, is not a Sasakian space form structure on $\bar M$,
we  conclude that $\bar M$ is one of the  homogeneous spaces of Type I or Type II listed in \cite{kowalski}. 

Let  $\tilde\nabla$ be the Okumura connection on $(\bar M,\psi,\eta,\xi,\la,\ra)$, that is,  (cf. \cite{okumura2})
\[
\tilde \nabla_XY=\nabla_XY+\frac12\tilde T(X,Y)
\]
where $\tilde T$ is the torsion of $\tilde \nabla$ given by
\begin{align*}
\tilde T(X,Y)=2\{-\la\psi X,Y\ra\xi-\eta(X)\psi Y+\eta(Y)\psi X\}.
\end{align*}
Denote by $\tilde R$ the curvature tensor of $\tilde \nabla$. Then 
\begin{align}\label{eqn:tilde_R}
\tilde R(X,Y)Z=&R(X,Y)Z+\frac14\{\tilde T(Y,\tilde T(X,Z))-\tilde T(X,\tilde T(Y,Z))  \notag\\
													&													+2\tilde T(\tilde T(X,Y),Z)\}.
\end{align}

Now we express  $\bar M=G/H$ as a homogeneous Riemannian space. 
Let $\mathfrak g$ and $\mathfrak h$ be the Lie algebra of  $G$ and $H$ respectively.
We consider an $\Ad (H)$-invariant decomposition $\mathfrak g=\mathfrak h\oplus \mathfrak m$ such that
the homogeneous space structure is naturally reductive
and  its  corresponding canonical connection  coincides with  the Okumura connection $\tilde \nabla$.
Up to identification, at the origin $o\in \bar M$, its tangent space $T_o\bar M=\mathfrak m$ and  we have 
\begin{align*}
\tilde T(X,Y)_o   &=-[X,Y]_{\mathfrak m} \\
(\tilde R(X,Y)Z)_o &  =-[[X,Y]_{\mathfrak h},Z] 
\end{align*}
for any $X$, $Y$, $Z\in\mathfrak m$.

By adopting the results obtained in \cite{kowalski-vanhecke, kowalski}, we may consider an orthonormal basis 
$\{X_1,X_2=\psi X_1,X_3,X_4=\psi X_3, X_5=-\xi\}$ of 
$\mathfrak m$ such that 
\begin{align}\label{eqn:tilde-R2}
\tilde R(X_1,X_3)=\tilde R(X_1,X_4)=\tilde R(X_2,X_3)=\tilde R(X_2,X_4)=0
\end{align} 
\begin{align}\label{eqn:[]}
\left.\begin{array}{l}
\tilde T(X_1,X_2)=2X_5, \quad  \tilde T(X_1,X_5)=-2X_2, \quad \tilde T(X_2,X_5)=2 X_1, \\
\tilde T(X_3,X_4)=2X_5, \quad  \tilde T(X_3,X_5)=-2X_4, \quad \tilde T(X_4,X_5)=2 X_3, \\
\tilde T(X_1,X_3)=\tilde T(X_1,X_4)=\tilde T(X_2,X_3)=\tilde T(X_2,X_4)=0. 
\end{array}\right\}
\end{align}
With the help of  (\ref{eqn:tilde-R2})--(\ref{eqn:[]}), we obtain  a contradiction after putting 
$X=X_1$, $Y=Z=X_j$,  $j\in\{3,4\}$ in  (\ref{eqn:tilde_R}).
Accordingly, this case can not occur and the proof is completed.

\end{document}